\theoremstyle{theorem}
\newtheorem{theorem}{\sc \textbf{Theorem}}[section]
\newtheorem{proposition}[theorem]{\sc \textbf{Proposition}}
\newtheorem{corollary}[theorem]{\sc \textbf{Corollary}}
\newtheorem{lemma}[theorem]{\sc \textbf{Lemma}}
\renewcommand{\approx}{ \asymp}
\theoremstyle{remark}
\newtheorem{remark}[theorem]{\sc \textbf{Remark}}
\renewcommand{\theequation}{\thesection.\arabic{equation}}
\numberwithin{equation}{section}
\newcommand{\N}{\mathbb{N}}
\newcommand{\Z}{\mathbb{Z}}
\def\eps{\varepsilon}
\def\e{\mathrm{e}}
\def\tr{\operatorname{tr}}
\def\supp{\operatorname{supp}}
\newcommand{\dd}{d}
\renewcommand{\leq}{\leqslant}
\renewcommand{\geq}{\geqslant}
\newcommand{\Ss}{\mathcal{S}}
\newcommand\numberthis{\addtocounter{equation}{1}\tag{\theequation}}
\author[T.\ Bruno]{Tommaso Bruno}
\address{Dipartimento di Matematica, Università degli Studi di Genova\\ Via Dodecaneso 35, 16146 Genova, Italy}
\email{brunot@dima.unige.it}
\author[J.T.\ van Velthoven]{Jordy Timo van Velthoven}
\address{Delft University of Technology, Mekelweg 4, Building 36, 2628 CD Delft, the Netherlands.}
\curraddr{Faculty of Mathematics, University of Vienna, Oskar-Morgenstern-Platz 1, 1090 Vienna, Austria}
\email{jordy-timo.van-velthoven@univie.ac.at}
\keywords{Homogeneous  group; dilations; Hardy spaces.}
	\thanks{{\em Math Subject Classification} 22E25, 22E30, 42B35.}
	\thanks{The first named author was partially supported by the 2022 INdAM--GNAMPA grant {\em Generalized Laplacians on continuous and discrete structures} (CUP\_E55F22000270001) and the 2024 INdAM--GNAMPA grant {\em $L^{p}$ estimates for singular integrals in nondoubling settings} (CUP\_E53C23001670001). For the second named author, this research was funded in whole or in part by the Austrian Science Fund (FWF): 10.55776/J4555.}
\begin{document}
\title[Hardy spaces and dilations]{Hardy spaces and dilations  on \\ homogeneous groups}
	\maketitle
\vspace{-0.5cm}
\begin{abstract}
On a homogeneous group, we characterize the one-parameter groups of dilations whose associated Hardy spaces in the sense of Folland and Stein are the same.
\end{abstract}

\section{Introduction}
Let $G$ be a homogeneous group, i.e., a connected, simply connected nilpotent Lie group whose Lie algebra $\mathfrak{g}$ admits automorphic dilations
\[
\delta_r^A = \exp(\ln(r) A), \qquad r>0,
\]
for a diagonalizable matrix $A \in \mathrm{GL}(\mathfrak{g})$ with positive eigenvalues. As $G$ is simply connected and nilpotent, its exponential map is a global diffeomorphism, and the automorphisms $\delta_r^A$ induce automorphisms of $G$ which we still denote by $\delta_r^A$.

Following Folland and Stein~\cite{FollandStein}, we consider Hardy spaces associated to the dilations $(\delta_r^A)_{r>0}$ on $G$ as follows. Given a Schwartz function $\phi \in \mathcal{S}$ on $G$, the associated radial maximal function of a tempered distribution $f \in \mathcal{S}'$ is
\[
M^0_{\phi,A} f = \sup\nolimits_{r > 0} r^{\tr(A)} | f \ast  (\phi \circ \delta_r^A)|,
\]
where $\ast$ denotes the convolution on $G$. Upon fixing a commutative approximate identity $\phi$ (see~\cite{glowacki1986stable, dziubanski1992remark}), the Hardy space $H^p_A$, with $p \in (0,1]$, is the space
\[
H^p_A = \{f \in \mathcal{S}' \colon M_{\phi,A}^0 f \in L^p  \}
\]
endowed with the quasi-norm $ f \mapsto \|M_{\phi,A}^0 f\|_{p}^{p} $. It is well known that there are several other equivalent definitions, see, e.g.,~\cite{christ1984singular, glowacki1987inversion, sato2020hardy, FollandStein}, but we leave further discussions on such choice to a later stage, cf.~Remark~\ref{rem:FS} below.

\smallskip

The aim of this paper is to characterize those dilation matrices $A, B \in \mathrm{GL}(\mathfrak{g})$ as above which induce, via the associated dilations $(\delta_r^A)_{r>0}$ and $(\delta_r^B)_{r>0}$ respectively, the same Hardy spaces $H^p_A$ and $H^p_B$. Our main result is the following theorem.

\begin{theorem} \label{thm:main}
$H^p_A = H^p_B$ for some (equivalently, all) $p \in (0,1]$ if and only if $A = c B$ for some $c > 0$.
\end{theorem}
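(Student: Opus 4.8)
The plan is to prove both implications by reducing matters to the behaviour of convolutions with dilated Schwartz functions at small and large scales, and by exploiting how the matrices $A$ and $B$ stretch different eigendirections of $\mathfrak{g}$.

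The easy direction is $A = cB \Rightarrow H^p_A = H^p_B$: replacing $A$ by $cB$ amounts to the substitution $\delta_r^A = \delta_{r^c}^B$, so $M^0_{\phi,A} f = \sup_{r>0} r^{\tr(A)}|f\ast(\phi\circ\delta_r^A)| = \sup_{s>0} s^{\tr(B)}|f\ast(\phi\circ\delta_s^B)| = M^0_{\phi,B}f$, since $\tr(A) = c\,\tr(B)$ and $r\mapsto r^c$ is a bijection of $(0,\infty)$. Thus the maximal functions literally coincide (for any fixed $\phi$), hence so do the spaces and their quasi-norms. The independence of $H^p_A$ on $p$ in the statement is a known fact for Folland--Stein Hardy spaces that I would simply invoke, so it suffices to treat a single value of $p$.

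For the converse I would argue by contraposition: assume $A$ and $B$ are not positive scalar multiples of one another and produce a tempered distribution lying in one Hardy space but not the other. First I would normalize so that $\tr(A) = \tr(B) =: Q$ (replacing $B$ by a scalar multiple changes neither $H^p_B$ nor the hypothesis). Since $A,B$ are diagonalizable with positive spectrum and $A\neq B$, after passing to a common refinement of eigenspace decompositions there is a direction $v\in\mathfrak{g}$ (or a subspace) on which $A$ acts by a scalar $\alpha$ and $B$ by a scalar $\beta$ with $\alpha\neq\beta$; because the traces agree there must also be a direction $w$ with the opposite inequality. The strategy is then to build a distribution $f$ adapted to the $A$-dilation structure — morally a suitable $A$-homogeneous distribution, or a lacunary sum $\sum_k \psi\circ\delta_{2^k}^A$ of bumps at $A$-scales with carefully chosen coefficients — whose $A$-maximal function is controlled (so $f\in H^p_A$) but whose $B$-maximal function picks up the mismatch along the direction $v$ or $w$ and fails to be in $L^p$. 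Concretely, I expect to exploit the classical characterization of Hardy-space membership via moment/cancellation conditions together with size bounds tied to the homogeneous norm $|\cdot|_A$; a distribution which "looks like" an $H^p_A$ atom at every $A$-scale will, when tested against $B$-dilates of $\phi$, behave like a superposition of pieces living at incompatible $B$-scales, and the anisotropy $\alpha\neq\beta$ forces the $B$-radial maximal function to decay too slowly in the $w$-direction to be $p$-integrable.

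The main obstacle, and the technical heart of the argument, is the last step: making rigorous the claim that a single distribution can be simultaneously "good for $A$" and "bad for $B$". The difficulty is that $H^p$ membership is a delicate interplay of \emph{both} cancellation (vanishing moments up to an order determined by $p$ and $Q$) \emph{and} size/regularity measured in the relevant homogeneous norm, and the two norms $|\cdot|_A$, $|\cdot|_B$ are not merely comparable up to a power — they genuinely distort along eigendirections with different exponents. I would likely handle this by a careful scaling/counting argument: estimate $\|M^0_{\phi,B} f\|_p^p$ from below by restricting the supremum to $r$ in a well-chosen lacunary set and the spatial integral to a region where the $v$- or $w$-coordinate dominates, and show the resulting sum diverges precisely because $\alpha\neq\beta$. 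One should also double-check that the construction is insensitive to the choice of commutative approximate identity $\phi$ (using that any two such choices give equivalent quasi-norms), and confirm that the contrapositive distribution can be taken genuinely tempered (e.g. by truncating the lacunary sum at large scales and letting the truncation tend to infinity, with uniform lower bounds).
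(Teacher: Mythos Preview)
Your easy direction is correct and matches the paper. The hard direction, however, has a genuine gap and diverges substantially from the paper's argument.

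The most concrete error is the claim that ``after passing to a common refinement of eigenspace decompositions there is a direction $v\in\mathfrak{g}$ on which $A$ acts by a scalar $\alpha$ and $B$ by a scalar $\beta$.'' Two diagonalizable matrices on $\mathfrak{g}$ need not have any common eigenvector unless they commute, and admissible dilation matrices on a homogeneous group have no reason to commute (already in the abelian case $G=\mathbb{R}^n$ one may take $A,B$ to be arbitrary diagonalizable matrices with positive spectrum). Your entire strategy of isolating eigendirections with mismatched exponents rests on this, so the construction as sketched cannot get off the ground. The remark that the ``independence of $H^p_A$ on $p$'' is a known fact you would simply invoke is also off: this is not an input but \emph{part} of what the theorem asserts, and it only follows once the hard implication is proved.

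The paper's route is quite different in structure. Rather than working by contraposition and building a single $f\in H^p_A\setminus H^p_B$, it argues by contradiction from the hypothesis $H^p_A=H^p_B$ and uses that hypothesis in an essential way. Equality of sets forces equivalence of quasi-norms (closed graph theorem), and since $D^{A,p}_t$ and $D^{B,p}_t$ are isometries on $H^p_A$ and $H^p_B$ respectively, one obtains a \emph{uniform} $H^p_A$-bound on a family of functions produced by applying \emph{mixed} $A$- and $B$-dilations to a single atom-like function $a_0$. The condition $A\neq cB$ is encoded not via common eigenvectors but through the operator norms $\|\exp(A)^j\exp(B)^{-k}\|_{\mathrm{GL}(\mathfrak{g})}$: their unboundedness (for a suitable choice $k=k(j)$ keeping the determinant under control) lets one manufacture dilates $a_j$ whose supports degenerate while their $H^p_A$-norms stay bounded. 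For $p=1$ the $a_j$ converge to a nonzero singular measure lying in $H^1_A$, contradicting $H^1_A\subset L^1$; for $p<1$ a direct lower bound on the radial maximal function of $a_j$ blows up, contradicting the uniform bound. Your lacunary sum $\sum_k\psi\circ\delta_{2^k}^A$ involves only $A$-dilations and therefore carries no information about $B$; the paper's objects are built from compositions $\delta^A_{\mathrm{e}^{j}}\delta^B_{\mathrm{e}^{-k}}$, and it is precisely this mixing that makes the mismatch between $A$ and $B$ visible without any simultaneous diagonalization.
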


Notice that we do not assume that the spaces have equivalent quasi-norms, but just being the same as sets. We also emphasize that $H^{p}_{A} = H^{p}_{B}$ is in general \emph{not} equivalent to the homogeneous quasi-norms on $G$ induced by $A$ and $B$ being equivalent; cf.~Proposition~\ref{prop:equivalent_norm} below.

The problem of characterizing the dilations which give rise to the same function spaces looks rather natural. In the case when $G$ is abelian, that is, when $G$ is some Euclidean space $\mathbb{R}^n$, this has already been studied for Hardy spaces associated to anisotropic or parabolic dilations~\cite{bownik2003anisotropic, bownik2020pde, calderon1977atomic, calderon1977parabolic, calderon1975parabolic} in~\cite{bownik2003anisotropic}.  More recently, similar problems have been investigated for Besov and Triebel--Lizorkin spaces in,  e.g., \cite{cheshmavar2020classification, fuehr2022classifying, koppensteiner2022classification}.

If $G$ is abelian, Theorem~\ref{thm:main} may be obtained from a combination of results in~\cite{bownik2003anisotropic, bownik2020pde} on Hardy spaces defined by  expansive dilation matrices.  The novelty of Theorem~\ref{thm:main} is that it is the first instance of such results on noncommutative groups. Our approach to the problem is strongly influenced by the aforementioned papers, in particular by Bownik's~\cite{bownik2003anisotropic}. Nevertheless, the noncommutative setting requires a number of nontrivial modifications which we shall discuss along the way.
It finally goes without saying that Folland and Stein's book~\cite{FollandStein} plays a key role in the paper, too.

\smallskip

The structure of the paper is as follows. In the following Section~\ref{sec:norms} we characterize those matrices $A$ and $B$ whose induced homogeneous norms on $G$ are equivalent: we show that this happens if and only if $A=B$. In Section~\ref{sec:hardy} we introduce Hardy spaces on $G$ and describe equivalent characterizations of their semi-norms in terms of atomic decompositions and grand maximal functions. In the final Section~\ref{sec:equivhardy} we prove Theorem~\ref{thm:main} and discuss an analogous result for $BMO$ spaces.

\subsection*{Setting and notation} All throughout, $G$ denotes a homogeneous group with identity $e$ and Lie algebra $\mathfrak{g}$. The dimension of $\mathfrak{g}$, whence that of $G$, will be denoted by $n$. We shall say that a matrix $A \in \mathrm{GL}(\mathfrak{g})$ is \emph{admissible} if it is diagonalizable, has positive eigenvalues and the matrix exponential $\exp(A \ln(r))$, $r>0$, is an automorphism of $\mathfrak{g}$. Given such a matrix $A$ and $r>0$, we denote by $\delta_{r}^{A}$ both the automorphisms $\exp(A \ln(r))$ of $\mathfrak{g}$ and the corresponding group automorphisms of $G$ given by $\exp_G \circ \; \delta_r^A \circ \exp_G^{-1}$, where $\exp_G \colon \mathfrak{g} \to G$ is the exponential map of $G$.
 
Given two functions $f, g \colon X \to [0, \infty)$ on a set $X$, we write $f \lesssim g$ if there exists $C > 0$ such that $f(x) \leq C g(x)$ for all $x \in X$. The notation $f \asymp g$ will be used whenever $f \lesssim g$ and $g \lesssim f$.

\section{Equivalence of homogeneous quasi-norms}\label{sec:norms}
Given an admissible matrix $A$, a homogeneous quasi-norm associated with $A$ (equivalently, with the family of dilations $(\delta_{r}^{A})_{r>0}$) is a continuous function $\rho_A \colon G\to [0,\infty)$ which is smooth in $G\setminus \{e\}$ and satisfies, for $x\in G$,
\begin{enumerate}
\item[(1)] $\rho_{A}(x^{-1}) = \rho_{A}(x)$,
\item[(2)] $\rho_{A}(\delta_{r}^{A}x) = r\rho_{A}(x)$,
\item[(3)] $\rho_{A}(x)=0$ if and only if $x=e$.
\end{enumerate}
Homogeneous quasi-norms do exist for any given family of dilations $(\delta^A_r)_{r>0}$, and any two such quasi-norms $\rho_A, \rho_A'$ are mutually equivalent, in the sense that $\rho_A \approx \rho'_A $; see \cite[p. 8]{FollandStein} and \cite[Proposition 3.1.35]{fischer2016quantization} for proofs of both facts. In addition, for all semi-norms $\rho_{A}$ there exists $C>0$ such that
\begin{align} \label{eq:triangle}
 \rho_A(xy) \leq C(\rho_A (x) + \rho_A(y))
\end{align}
for all $x,y \in G$; see \cite[p.11]{FollandStein} and \cite[Proposition 3.1.38]{fischer2016quantization}.

Given $x_{0}\in G$ and $r>0$, the ball associated to a homogeneous norm $\rho_{A}$ centered at $x_{0}$ with radius $r$ is
\[
 \mathcal{B}^{A}(x_0, r) \coloneqq \{ x\in G \colon \rho_{A}(x_0^{-1} x)<r\}.
\]
With such definition, we also have
\begin{equation}\label{dilationball}
\mathcal{B}^{A}(x_0, r) = x_0 \mathcal{B}^A (e, r), \qquad \mathcal{B}^{A}(e, r) = \delta_{r}^{A}\mathcal{B}^{A}(e, 1).
\end{equation}
If $\lambda$ is the Lebesgue measure on $\mathfrak{g}$, we define the associated Haar measure $\mu$ on $G$ by $\mu = \lambda \circ \exp_{G}^{-1}$. Then, for all measurable subsets $E$ of $G$,
\begin{equation}\label{measuredilation}
\mu(\delta_{r}^{A}E) = r^{\tr(A)} \mu(E).
\end{equation}
In particular, $\mu(\mathcal{B}^{A}(x_{0},r)) = r^{\tr(A)} \mu(\mathcal{B}^{A}(e,1))$. In view of~\eqref{measuredilation}, the trace of $A$ is often called the \emph{homogeneous dimension} of $G$ (with respect to $A$).

In addition to homogeneous quasi-norms, we will also make use of a function on $G$ that is homogeneous with respect to dilations by multiples of the identity matrix.
For this, endow $\mathfrak{g}$ with an orthonormal basis $\{Y_{1}, \dots, Y_{n}\}$ and let $\| \cdot \|$ be the associated Euclidean norm. Then extend it to a function on $G$ by means of the exponential map, i.e., (with a slight abuse) $\| x \| = \| \exp_G^{-1} x\|$ for $x\in G$. Observe that $\| x^{-1}\| = \|x\|$, though $\| \cdot \|$ \emph{is not} a norm nor a quasi-norm on $G$ unless $G$ is abelian. We denote the ``ball'' of center $x_{0}$ and radius $r$ with respect to $\| \cdot \|$ simply by
\[
\mathcal{B}(x_{0},r) := \{ x\in G \colon \|x_{0}^{-1}x\|<r\} =  x_{0}\mathcal{B}(e,r).
\]
The function $\| \cdot \|$ on $G$ is homogeneous with respect to the classical (Euclidean) dilations $\delta^I_{t} := \exp_G \; \circ \exp(I \ln(t)) \circ \exp_G^{-1}$, $t>0$, namely $\|\delta^I_{t} x\|=t \|x\|$ for $x\in G$. However, we remark that such dilations are automorphisms of $G$ if and only if $G$ is abelian. More generally, we shall write
\[
\delta_{t}^{\Lambda} := \exp_G \circ \exp( \ln(t) \Lambda) \circ \exp_G^{-1}
\]
for $t > 0$ and general $\Lambda \in \mathrm{GL}(\mathfrak{g})$, which do not need to be automorphisms. Since the identity $I$ and its multiples commute with all matrices, the dilations $\delta^I_{t}$ commute with any other dilation $\delta^{\Lambda}_r$: for any $x\in G$, then,
\begin{equation}\label{euclnormcomm}
\delta_{t}^I \delta_{r}^{\Lambda} x = \delta_{r}^{\Lambda} \delta_{t}^I x, \quad r, t > 0.
\end{equation}
Given $\Lambda \in \mathrm{GL}(\mathfrak{g})$, we shall denote by $\| \Lambda \|_{\mathrm{GL}(\mathfrak{g})}$ its operator norm associated to the Euclidean norm $\| \cdot \|$ on $\mathfrak{g}$. Observe that $\delta_{1}^{\Lambda}$ is the identity map for all $\Lambda \in \mathrm{GL}(\mathfrak{g})$. 

 Given two admissible matrices $A$ and $B$, we say that two associated quasi-norms $\rho_{A}$ and $\rho_{B}$ are \emph{equivalent} if $\rho_{A} \approx \rho_{B}$, namely (we recall it for future use) if there exists a constant $C>0$ such that
\begin{equation}\label{equivnorms}
C^{-1}\rho_{B}(x) \leq \rho_{A}(x) \leq C \rho_{B}(x) \qquad
\end{equation}
for all $x \in G$. 

In the following proposition we show that the equivalence of homogeneous quasi-norms is a rather rigid condition; cf.~\cite[Lemma 10.2]{bownik2003anisotropic}.

\begin{proposition} \label{prop:equivalent_norm}
Let $A, B \in \mathrm{GL}(\mathfrak{g})$ be admissible matrices and $\rho_A, \rho_B$ associated quasi-norms. Then $\rho_A \asymp \rho_B$ if and only if $A=B$.
\end{proposition}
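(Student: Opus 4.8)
The plan is to prove the contrapositive of the hard direction: assuming $\rho_A \asymp \rho_B$, deduce $A = B$ (the converse being trivial since if $A = B$ the two quasi-norms are equivalent by the standard fact recalled above). First I would reduce the problem to a statement about the one-parameter groups of dilations. Write $D(t) = \delta^A_{e^t}$ and $E(t) = \delta^B_{e^t}$, two one-parameter subgroups of $\mathrm{GL}(\mathfrak{g})$ with $D(t) = \exp(tA)$, $E(t) = \exp(tB)$. The key observation is that $\rho_A(\delta^A_r x) = r \rho_A(x)$ together with $\rho_A \asymp \rho_B$ forces, for every fixed $x \neq e$, the function $r \mapsto \rho_B(\delta^A_r x)$ to be comparable to $r$ uniformly in $x$ on the unit sphere $\{\rho_B = 1\}$. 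Concretely, if $x$ lies on the $\rho_B$-unit sphere, then $\delta^A_r x$ has $\rho_B$-value comparable to $r$, so $\delta^A_r x$ stays in a fixed annulus $\{c^{-1} r \le \rho_B \le c r\}$; equivalently $\delta^B_{s}\delta^A_r x$ remains in a fixed compact annulus for $s \asymp -\log r$. I would then exploit the fact that $\rho_B$-balls are exactly $\delta^B$-dilates of a fixed bounded open set to translate this into: there is a compact set $K \subset \mathrm{GL}(\mathfrak{g})$ acting transitively-enough that $\delta^B_{-\log r}\,\delta^A_r$ lies in $K$ for all $r > 0$, i.e.\ $\exp(-tB)\exp(tA)$ is bounded in $\mathrm{GL}(\mathfrak{g})$ as $t \to \pm\infty$, after an appropriate normalization $\tr(A) = \tr(B)$ coming from comparing measures of balls via \eqref{measuredilation}.

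Next I would analyze the one-parameter group $t \mapsto \exp(tA)\exp(-tB)$, or better, pass to the Lie algebra level. Since $A$ and $B$ are diagonalizable with positive eigenvalues, I would diagonalize. The crucial point: if $\exp(-tB)\exp(tA)$ stays bounded for all real $t$, I want to conclude $A = B$. This is where I expect the main work. One clean route: consider the curve $\gamma(t) = \exp(tA)$ and observe boundedness of $\exp(-tB)\gamma(t)$ means $\|\exp(tA) v\| \asymp \|\exp(tB) v\|$ for all $v \in \mathfrak{g}$, $t \in \mathbb{R}$ (uniformly). Decompose $\mathfrak{g} = \bigoplus_\lambda V_\lambda$ into $A$-eigenspaces and $\mathfrak{g} = \bigoplus_\mu W_\mu$ into $B$-eigenspaces. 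Testing the asymptotic $\|\exp(tA)v\| \asymp e^{\lambda t}\|v\|$ for $v \in V_\lambda$ against the $B$-decomposition of $v$ and letting $t \to +\infty$ then $t \to -\infty$ forces $v$ to lie in a single $W_\mu$ with $\mu = \lambda$; hence $V_\lambda \subseteq W_\lambda$ for each $\lambda$, and by symmetry $V_\lambda = W_\lambda$, so $A = B$ as diagonalizable operators with the same eigenspaces and eigenvalues. The normalization step then shows no constant $c \neq 1$ can intervene here (unlike the Hardy space setting), because equality of quasi-norms is strictly more rigid than equality of Hardy spaces.

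The main obstacle, and the place requiring genuine care, is the first reduction: converting the pointwise comparability $\rho_A \asymp \rho_B$ into a \emph{uniform} boundedness statement for the matrix cocycle $\exp(-tB)\exp(tA)$ on all of $\mathfrak{g}$, not just on one $\rho_B$-sphere. The subtlety is that $\delta^A_r$ does not preserve the $\rho_B$-sphere, so one must chase a point around: given $v \in \mathfrak{g}$ with $\rho_B(\exp_G v) = 1$, the point $\delta^A_r \exp_G v$ has $\rho_B$-value comparable to $r$, hence equals $\delta^B_{s(r)} w$ for some $w$ on the $\rho_B$-unit sphere and some $s(r) \asymp r$; but the map $r \mapsto (s(r), w)$ is not a priori continuous or controlled, so one needs a compactness argument on the (compact) unit sphere combined with properness of the dilation flow to pin down that the "drift" matrix $\exp(-s(r)B)\exp(\log r \cdot A)$ stays in a fixed compact subset of $\mathrm{GL}(\mathfrak{g})$ — and then a second argument to upgrade "bounded on the sphere, modulo a $B$-dilation" to "bounded as an operator on all of $\mathfrak{g}$." I would handle this by fixing a basis of $\mathfrak{g}$ consisting of points whose $\exp_G$-images lie on the $\rho_B$-unit sphere (rescaling each coordinate axis), tracking each basis vector separately, and using that a linear map bounded on a spanning set of vectors whose images remain in a fixed compact set is itself bounded. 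This is the analog of \cite[Lemma 10.2]{bownik2003anisotropic} but the non-abelianity enters precisely because $\delta^A_r$ and $\delta^B_s$ need not commute, so the bookkeeping of which dilation is applied in which order must be done carefully on the Lie algebra side where everything is linear.
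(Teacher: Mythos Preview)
Your proposal is correct and follows the same overall arc as the paper---establish $\tr(A)=\tr(B)$ from ball volumes, show the cocycle $t\mapsto\exp(-tB)\exp(tA)$ is bounded in $\mathrm{GL}(\mathfrak{g})$, then deduce $A=B$---but the execution differs in two places worth noting.

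For the final step (bounded cocycle $\Rightarrow A=B$) the paper simply quotes \cite[Theorem 7.9]{cheshmavar2020classification}, whereas you give a self-contained eigenspace argument: for $v\in V_\lambda$, compare growth rates of $\|\exp(tA)v\|=e^{\lambda t}\|v\|$ and $\|\exp(tB)v\|$ as $t\to\pm\infty$ to force $v\in W_\lambda$. This is a clean direct proof of that cited result in the diagonalizable case and makes the argument more self-contained.

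For the reduction step, the paper's argument is tidier than yours. Rather than tracking individual basis vectors on the $\rho_B$-unit sphere and managing the auxiliary dilation parameter $s(r)$, the paper works with the Euclidean norm $\|\cdot\|$ pulled back via $\exp_G^{-1}$ and exploits the fact that the Euclidean dilations $\delta^I_t$ commute with \emph{every} $\delta^\Lambda_r$ (equation \eqref{euclnormcomm}). Thus: for $\|x\|=1$ one shows $\rho_A(\delta^A_{1/r}\delta^B_r x)$ is bounded directly from $\rho_A\asymp\rho_B$, passes to a Euclidean bound by compactness of $\rho_A$-balls, and then extends to arbitrary $x$ in one line via $\delta^I_{\|x\|^{-1}}$. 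This avoids your $s(r)$ bookkeeping entirely. Your basis-tracking approach does work (the correction factor $\exp((\log s(r)-\log r)B)$ is bounded since $s(r)\asymp r$), but it is more laborious.

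One remark: your concern that ``non-abelianity enters because $\delta^A_r$ and $\delta^B_s$ need not commute'' conflates two different non-commutativities. Whether these dilations commute is governed by whether the \emph{matrices} $A$ and $B$ commute, which is independent of whether the \emph{group} $G$ is abelian. In fact this proposition is entirely a linear-algebra statement on $\mathfrak{g}$ via $\exp_G^{-1}$, and the non-abelian structure of $G$ plays no role---which is precisely why the paper's Euclidean-dilation trick works so cleanly.
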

\begin{proof}
Since all homogeneous quasi-norms associated to an admissible matrix are equivalent, it follows that $\rho_A \asymp \rho_B$ for any choice of $\rho_{A}$ and $\rho_{B}$ whenever $A=B$. As for the converse, since $A, B$ are admissible, their exponentials $\exp(A)$ and $\exp(B)$ have only strictly positive eigenvalues, and thus admit a unique logarithm, see, e.g., \cite[Theorem 1.31]{higham2008functions}. Consequently, $\exp(A) = \exp(B)$ if and only if $A = B$, and it is then enough to prove that if $\rho_A \asymp \rho_B$ then $\exp(A)=\exp(B)$.

If~\eqref{equivnorms} holds, then $\mathcal{B}^{A}(e,r) \subseteq \mathcal{B}^{B}(e,Cr)$ for all $r>0$. By~\eqref{dilationball}, this amounts to
\[
\delta_{r}^{A} \mathcal{B}^{A}(e,1) \subseteq \delta_{r}^{B}  \mathcal{B}^{B}(e,C),
\]
which implies by~\eqref{measuredilation}, for $r>0$,
\[
r^{\tr(A)} \mu( \mathcal{B}^{A}(e,1)) \leq r^{\tr(B)} \mu(\mathcal{B}^{B}(e,C)).
\]
Thus, the function $r\mapsto r^{\tr(A) - \tr(B)}$ is bounded on $(0,\infty)$, and hence  $\tr(A) = \tr(B)$.
In particular, this shows $\det(\exp(A)) = \det(\exp(B))$, so that $\exp(A) = \exp(B)$ follows from \cite[Theorem 7.9]{cheshmavar2020classification}, provided
\begin{align} \label{eq:equivalent_norm_matrix}
 \sup_{k \in \mathbb{Z}} \| \exp(A)^{-k} \exp(B)^k \|_{\mathrm{GL}(\mathfrak{g})} < \infty.
\end{align}
Since
\begin{align*}
 \sup_{k \in \mathbb{Z}} \| \exp(A)^k \exp(B)^{-k} \|_{\mathrm{GL}(\mathfrak{g})}
 &\leq \sup_{r > 0} \| \exp(\ln(r)A) \exp(\ln(1/r) B) \|_{\mathrm{GL}(\mathfrak{g})} \\
 &= \sup_{r > 0} \sup_{x \in G \setminus \{e\}} \frac{\| \delta_r^A x \|}{\| \delta_r^B x\|}, \numberthis \label{eq:equiv_discrete_continuous}
\end{align*}
the desired conclusion will follow once we show that the quantity in~\eqref{eq:equiv_discrete_continuous} is finite.

In order to do this, note first that if $x \in G$ is such that $\|x\|=1$, then
\begin{align*}
\rho_{A}(\delta_{1/r}^{A}\delta_{r}^{B}x) &
= r^{-1}\rho_{A}(\delta_{r}^{B}x) \\
&\leq C r^{-1}\rho_{B}(\delta_{r}^{B}x)  = C \rho_{B}(x) \leq C \sup\{ \rho_{B}(z) \colon \|z\|=1\} \leq D,
\end{align*}
where the last supremum is finite because $\{z\in G\colon \|z\|=1\}$ is compact and $ \rho_{B}$ is continuous. In other words,
\[
\delta_{1/r}^{A}\delta_{r}^{B} \{ x\in G\colon \|x\|=1 \} \subseteq \mathcal{B}^{A}(e,D).
\]
Since $ \overline{\mathcal{B}^{A}(e,D)}$ is compact by~\cite[Lemma 1.4]{FollandStein}, there is $R>0$ such that
\[
\delta_{1/r}^{A}\delta_{r}^{B} \{ x\in G\colon \|x\|=1 \} \subseteq \mathcal{B}^{A}(e,D) \subseteq  \mathcal{B}(e,R),
\]
namely $\| \delta_{1/r}^{A}\delta_{r}^{B} x\| \leq R$ for all $r>0$ and $x\in G$ such that $\|x\|=1$. If now $x\in G$ is arbitrary, then $\delta^I_{\|x\|^{-1}} x \in \{ x\in G\colon \|x\|=1 \} $, and by~\eqref{euclnormcomm},
\[
\| \delta_{1/r}^{A}\delta_{r}^{B} x\| \leq R\|x\|
\]
for all $x\in G$ and $r>0$. This last inequality is equivalent to
\[
\| \delta_{1/r}^{A} x\| \leq R\|\delta_{1/r}^{B}x\|
\]
for all $x\in G$ and $r>0$, yielding
\[
\sup_{x\in G\setminus \{e\}} \sup_{r>0} \frac{\| \delta_{r}^{A} x\| }{\| \delta_{r}^{B} x\|} = \sup_{x\in G\setminus \{e\}} \sup_{r>0} \frac{\| \delta_{1/r}^{A} x\| }{\| \delta_{1/r}^{B} x\|} \leq R,
\]
which completes the proof.
\end{proof}

\section{Hardy spaces on $G$} \label{sec:hardy}
For $p \in (0,1]$ and $\Lambda \in \mathrm{GL}(\mathfrak{g})$, we consider the dilation of a function $f$ on $G$
\[
D^{\Lambda,p}_{t} f(x) := t^{\tr(\Lambda)/p} f(\delta^{\Lambda}_{t} (x)), \qquad x\in G, \, t>0.
\]
We shall equivalently write $ f_t^{\Lambda,p}$ for $D^{\Lambda,p}_{t} f$ and $ f_t^{\Lambda}$ for $ f_t^{\Lambda,1}$. Let us observe that since $ \delta^{\Lambda}_t= \delta^{c\Lambda}_{t^{1/c}}$ for all $t>0$, one has
\begin{equation}\label{dilphi}
 f^{\Lambda}_t(x) = t^{\tr(\Lambda)}f(\delta^{\Lambda}_{t} (x)) = (t^{1/c})^{ \tr(c\Lambda)} f(\delta^{c\Lambda}_{t^{1/c}} (x) ) = f^{c\Lambda}_{t^{1/c}}(x), \qquad x\in G.
\end{equation}

Let now $A \in \mathrm{GL}(\mathfrak{g})$ be admissible. Given a Schwartz function $\phi \in \mathcal{S}$ and a tempered distribution $f \in \mathcal{S}'$, the \emph{radial maximal function} $M^0_{\phi,A} f$ of $f$ (with respect to $A$ and $\phi$) is
\begin{align} \label{eq:radial}
 M^{0}_{\phi,A} f (x) = \sup_{t > 0} |f \ast \phi^A_{t} (x)|, \quad x \in G.
\end{align}

Suppose now that $\phi \in \Ss$ is a commutative approximate identity for $A$, that is, $\int_G \phi \; d\mu = 1$ and  $\phi_s^A \ast \phi_t^A = \phi_t^A \ast \phi_s^A$ for all $s, t > 0$, cf.~\cite{glowacki1986stable, dziubanski1992remark}.
For $p \in (0,1]$, we define the \emph{Hardy space} $H^p_A$ as
\[
H^p_A = \{ f \in \mathcal{S}' \colon M^0_{\phi,A} f \in L^p\},
\]
endowed with the quasi-norm
\begin{equation}\label{HpAnorm}
\| f\|_{H^{p}_{A}}^{p} :=  \| M^0_{\phi,A} f\|_{p}^{p}.
\end{equation}
Here and all throughout, the $L^{p}$ norms are taken with respect to the Haar measure $\mu$.

We comment on the choice of this definition, among all the others available, in Remark~\ref{rem:FS} below. We first show that with such definition $H^p_A$ is invariant under scaling of the dilation matrix $A$. This is a straightforward consequence of~\eqref{dilphi}, but we state it as a lemma for future reference.
\begin{lemma} \label{lem:scaling_invariant}
Let $A \in \mathrm{GL}(\mathfrak{g})$ be admissible and $c > 0$. Then $H^p_A = H^p_{cA}$ with equality of quasi-norms for all $p \in (0, 1]$.
\end{lemma}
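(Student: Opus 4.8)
The plan is to reduce everything to the scaling identity \eqref{dilphi} and the fact that the defining data of the Hardy space — the radial maximal function — only depends on the family of dilations $(\delta_t^A)_{t>0}$ as a family, not on the particular parametrization. The key observation is that for $c>0$ one has $\delta_t^{cA} = \delta_{t^{1/c}}^{A}$ (equivalently $\delta_r^A = \delta_{r^c}^{cA}$), so the two one-parameter groups $(\delta_t^A)_{t>0}$ and $(\delta_t^{cA})_{t>0}$ are literally the same set of automorphisms, merely reparametrized. Therefore, after checking the normalization constants match, the two radial maximal functions coincide pointwise, and the Hardy quasi-norms are equal, not just equivalent.

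The key steps, in order. First I would fix an admissible $A$ and $c>0$, and note that $cA$ is again admissible (it is diagonalizable with positive eigenvalues, and $\exp(\ln(r)\,cA) = \exp(\ln(r^c)A) = \delta_{r^c}^A$ is an automorphism of $\mathfrak{g}$, hence of $G$). Second, I would fix a commutative approximate identity $\phi \in \mathcal{S}$ for $A$: this means $\int_G \phi\, d\mu = 1$ and $\phi_s^A * \phi_t^A = \phi_t^A * \phi_s^A$ for all $s,t>0$. By \eqref{dilphi} we have $\phi_t^A = \phi_{t^{1/c}}^{cA}$, so as $t$ ranges over $(0,\infty)$ the family $\{\phi_t^A\}$ equals the family $\{\phi_s^{cA}\}$; consequently the commutativity relation $\phi_s^{cA} * \phi_t^{cA} = \phi_t^{cA} * \phi_s^{cA}$ holds as well, and $\int_G \phi\, d\mu = 1$ is unchanged. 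Hence the \emph{same} function $\phi$ is a legitimate commutative approximate identity for $cA$, and we may use it to define $H^p_{cA}$.

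Third, I would compute the radial maximal functions. Using $\phi_t^A = \phi_{t^{1/c}}^{cA}$ and substituting $s = t^{1/c}$ (a bijection of $(0,\infty)$ onto itself),
\[
M^0_{\phi,A} f(x) = \sup_{t>0} |f * \phi_t^A(x)| = \sup_{t>0} |f * \phi_{t^{1/c}}^{cA}(x)| = \sup_{s>0} |f * \phi_s^{cA}(x)| = M^0_{\phi,cA} f(x)
\]
for every $x \in G$ and every $f \in \mathcal{S}'$. Fourth, I would conclude: since $M^0_{\phi,A} f = M^0_{\phi,cA} f$ pointwise, membership in $L^p$ is the same condition, so $H^p_A = H^p_{cA}$ as sets, and moreover $\|f\|_{H^p_A}^p = \|M^0_{\phi,A}f\|_p^p = \|M^0_{\phi,cA}f\|_p^p = \|f\|_{H^p_{cA}}^p$, giving equality of quasi-norms.

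There is no serious obstacle here; the only point requiring a moment's care is the bookkeeping of the normalization exponent in \eqref{dilphi} — one must use the $p=1$ dilation $f_t^\Lambda = t^{\tr(\Lambda)} f(\delta_t^\Lambda(\cdot))$ in the definition of the radial maximal function (as in \eqref{eq:radial}) and verify that the identity $\phi_t^A = \phi_{t^{1/c}}^{cA}$ is exactly the content of \eqref{dilphi} with $\Lambda = A$, so that $t^{\tr(A)} = (t^{1/c})^{\tr(cA)}$ automatically. One should also remark that the definition of $H^p_{cA}$ is a priori allowed to use a different approximate identity; the point of step two is precisely that it need not, and in any case the resulting space is independent of that choice, so the identification is unambiguous.
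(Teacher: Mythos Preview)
Your proposal is correct and follows exactly the paper's approach: the paper's proof is the single line ``It is enough to observe that~\eqref{dilphi} implies $M^0_{\phi,A} f = M^0_{\phi,cA} f$,'' and you have simply spelled out this observation in full detail, including the admissibility of $cA$ and the fact that the same $\phi$ serves as a commutative approximate identity for both dilations.
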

\begin{proof}
It is enough to observe that~\eqref{dilphi} implies $M^0_{\phi,A} f = M^0_{\phi,cA} f$.
\end{proof}
We can now elaborate on this and on our definition of $H^{p}_{A}$.

\begin{remark}\label{rem:FS}
By Lemma~\ref{lem:scaling_invariant}, up to adjusting the dilation matrix $A$ if necessary, it may be assumed that the minimum eigenvalue of $A$ is $1$ without affecting the space $H^p_A$ or its norm. Therefore, though the minimum eigenvalue of $A$ being $1$ is a standing assumption in~\cite{FollandStein} which we do not make, several results therein are still valid in our setting. In particular, by combining Lemma~\ref{lem:scaling_invariant} and~\cite[Corollary~4.17]{FollandStein}, one sees that:
\begin{itemize}
\item[(a)] a different choice of the commutative approximate identity $\phi$ originates an equivalent quasi-norm~\eqref{HpAnorm} of $H^p_A$;
\item[(b)] by~\cite[Proposition~2.15]{FollandStein}, $H^{p}_{A}$ embeds continuously in $\mathcal{S}'$ for all $p\in (0,1]$;
\item[(c)] by~\cite[Proposition~2.16]{FollandStein}, the quasi-norm~\eqref{HpAnorm} induces a metric on $H^{p}_{A}$ which makes it a complete metric space.
\end{itemize}
Let us emphasize, however, that our definition of admissible matrix does not give rise to new spaces with respect to those of~\cite{FollandStein}, but rather allows (whenever needed) for a larger flexibility in the choice of the matrices which describe the same Hardy space. In view of all this, if one adheres strictly to the setting of~\cite{FollandStein}, i.e.,\  assumes that the minimum eigenvalue of an admissible matrix is $1$, then Theorem~\ref{thm:main} reads as follows: \emph{$H^p_A = H^p_B$ for some (equivalently, all) $p \in (0,1]$ if and only if $A = B$}.
 \end{remark}

 The following lemma will be used repeatedly.

\begin{lemma} \label{lem:equivalentnorms}
Suppose $p \in (0,1]$ and let $A,B \in \mathrm{GL}(\mathfrak{g})$ be admissible. If $H^p_A = H^p_B$, then their quasi-norms are equivalent.
\end{lemma}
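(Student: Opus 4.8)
The natural approach is via the closed graph theorem. By Remark~\ref{rem:FS}(c), both $H^p_A$ and $H^p_B$ are complete metric spaces (with the $p$-homogeneous metric induced by the respective quasi-norms), and by Remark~\ref{rem:FS}(b) both embed continuously into $\mathcal{S}'$. Consider the identity map $\iota \colon H^p_A \to H^p_B$, which is well-defined and bijective by the hypothesis $H^p_A = H^p_B$. If I can show its graph is closed, then an open mapping / closed graph theorem for complete metrizable topological vector spaces (F-spaces) gives that $\iota$ and $\iota^{-1}$ are both continuous, hence the two quasi-norms are equivalent. To check the graph is closed: suppose $f_n \to f$ in $H^p_A$ and $f_n \to g$ in $H^p_B$. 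By the continuous embeddings into $\mathcal{S}'$, $f_n \to f$ and $f_n \to g$ in $\mathcal{S}'$, so $f = g$ by uniqueness of limits in $\mathcal{S}'$ (which is Hausdorff). Hence the graph is closed and the closed graph theorem applies.

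First I would spell out carefully that the quasi-norm~\eqref{HpAnorm} raised to the power $p$ (i.e.\ $f \mapsto \|M^0_{\phi,A}f\|_p^p$) is a translation-invariant metric — this is exactly the content of Remark~\ref{rem:FS}(c) — so that $(H^p_A, d_A)$ with $d_A(f,h) = \|M^0_{\phi,A}(f-h)\|_p^p$ is a complete metric topological vector space, and likewise for $B$. Then I would invoke the closed graph theorem in the form valid for F-spaces (complete metrizable TVS), e.g.\ as in Rudin's \emph{Functional Analysis}: a linear map between F-spaces with closed graph is continuous. Applying this to $\iota$ gives $d_B(\iota f, 0) \lesssim$ something controlled by $d_A(f,0)$; more precisely continuity at $0$ plus linearity plus the quasi-homogeneity $\|M^0_{\phi,B}(tf)\|_p^p = t^p \|M^0_{\phi,B}f\|_p^p$ upgrades to the quantitative bound $\|M^0_{\phi,B}f\|_p^p \leq C \|M^0_{\phi,A}f\|_p^p$ for all $f$. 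Running the same argument with the roles of $A$ and $B$ exchanged yields the reverse inequality, so the quasi-norms are equivalent.

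The main obstacle — really the only subtlety — is making sure the abstract functional-analytic machinery applies verbatim in the quasi-normed setting: one must work with the genuine metric $d_A(f,h) = \|M^0_{\phi,A}(f-h)\|_p^p$ rather than the quasi-norm itself, verify it is a complete invariant metric making scalar multiplication and addition continuous (all granted by Remark~\ref{rem:FS}), and then deduce the homogeneous quasi-norm inequality from metric continuity at the origin using the scaling $D^{\Lambda,p}_t$ behaviour of $M^0_{\phi,A}$. A minor point worth noting is that one should confirm the topology defined by $d_A$ does not depend on the choice of commutative approximate identity $\phi$, but this is Remark~\ref{rem:FS}(a). Everything else is routine; no hard analysis on the group $G$ is needed for this lemma — the work is entirely in the soft setup.
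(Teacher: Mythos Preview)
Your proposal is correct and follows essentially the same route as the paper: both use that $H^p_A$ and $H^p_B$ are F-spaces continuously embedded in $\mathcal{S}'$ (Remark~\ref{rem:FS}), so the identity map has closed graph and is therefore continuous, yielding the quasi-norm equivalence. You spell out the closed-graph verification and the passage from continuity to a homogeneous bound in more detail than the paper does, but the argument is the same.
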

\begin{proof}
By the discussion in Remark~\ref{rem:FS}, the maps
\[
(f, g) \mapsto \| f-g \|_{H^p_A}^p, \qquad (f, g) \mapsto \| f-g \|_{H^p_B}^p
\]
are invariant metrics making $H^p_A$ and $H^p_B$ respectively into complete metric spaces and $H^p_A = H^p_B \hookrightarrow \mathcal{S}'$, whence the map $\iota \colon H^p_A \to H^p_B, \; f \mapsto f$ is well defined and has a  closed graph. By the closed graph theorem $\iota$ is continuous, so that $\| f \|_{H^p_B} \lesssim \| f \|_{H^p_A}$ for all $f \in H^p_A = H^p_B$. The other inequality follows similarly.
\end{proof}

In the remainder of this section we discuss equivalent characterizations of $H^{p}_{A}$ which will be of use to prove Theorem~\ref{thm:main}. In view of Remark~\ref{rem:FS}, we shall assume that the minimum eigenvalue of $A$ is $1$. We begin with the following simple lemma.

\begin{lemma}\label{lemmaeta}
Suppose $A$ is an admissible matrix with minimum eigenvalue $1$. Then there exists $\gamma>0$  (depending on $A$) such that, for all homogeneous quasi-norms $\rho_{A}$ associated with $(\delta_{r}^{A})$, the following holds.
\begin{enumerate}
\item[(i)] For all $R>0$ there exist $c_{1},c_{2}>0$ (which depend on $A$, $\rho_{A}$ and $R$) satisfying, for all $x\in \overline{\mathcal{B}^{A}(e,R)}$,
\begin{align}\label{FSequiv}
c_{1}\|x\| \leq \rho_{A}(x) \leq c_{2}\|x\|^{\gamma}.
\end{align}
In particular,
\begin{equation}\label{inclusionballs}
\mathcal{B}\Big(e, \Big( \frac{ c_{1}}{c_{2}} R \Big)^{1/\gamma}\Big)\subseteq \mathcal{B}^{A}(e, c_{1}R) \subseteq \mathcal{B}(e,R).
\end{equation}

\item[(ii)] For all $R>0$ there exists $C>0$ (which depends on $A$, $\rho_{A}$ and $R$) such that, for all $x, y \in \overline{\mathcal{B}(e,R)}$,
\begin{align} \label{eq:triangle2}
 \| x y \| \leq C\big(\| x \|^{\gamma} + \| y \|^{\gamma}\big).
 \end{align}
\end{enumerate}
\end{lemma}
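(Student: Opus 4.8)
The plan is to base the whole lemma on a single comparison of $\rho_A$ with $\|\cdot\|$ near $e$, read off from the eigenspace decomposition of $A$, and then obtain~(ii) and the inclusions~\eqref{inclusionballs} from~(i) together with the quasi-triangle inequality~\eqref{eq:triangle} for $\rho_A$.

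First I would fix $\gamma := 1/a_n$, where $1 = a_1 \leq a_2 \leq \dots \leq a_n$ are the eigenvalues of $A$ (the smallest being $1$ by hypothesis); thus $\gamma \in (0,1]$ depends only on $A$. After replacing $\|\cdot\|$ by the equivalent Euclidean norm for which the $a_j$-eigenspaces of $A$ are mutually orthogonal (which only changes constants by factors depending on $A$), for $X = \sum_j X_j \in \mathfrak{g}$ with $X_j$ in the $a_j$-eigenspace one has $\|\delta_r^A X\|^2 = \sum_j r^{2 a_j}\|X_j\|^2$, and since $a_1 = 1 \leq a_j \leq a_n$ this yields
\begin{equation*}
 r^{a_n}\|X\| \lesssim \|\delta_r^A X\| \lesssim r\|X\| \quad (0 < r \leq 1), \qquad r\|X\| \lesssim \|\delta_r^A X\| \lesssim r^{a_n}\|X\| \quad (r \geq 1),
\end{equation*}
with implied constants depending only on $A$. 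Since $\exp_G$ intertwines the linear and the group dilations $\delta_r^A$, the same bounds hold on $G$. Writing an arbitrary $x \neq e$ as $x = \delta_{\rho_A(x)}^A y$ with $\rho_A(y) = 1$, and using that the ``unit sphere'' $\{z \in G : \rho_A(z) = 1\}$ is nonempty, closed and contained in the compact set $\overline{\mathcal{B}^A(e,1)}$ --- hence compact, so that $\|y\| \asymp 1$ on it --- these turn into two-sided comparisons of $\|x\|$ with $\rho_A(x)$ and with $\rho_A(x)^{a_n}$.

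For~(i) I would fix $R > 0$ and split into the cases $\rho_A(x) \leq 1$ and $1 \leq \rho_A(x) \leq R$; solving the comparisons above for $\rho_A(x)$ produces $c_1, c_2 > 0$, depending on $A$, $\rho_A$ and $R$ (through the size of the unit sphere of $\rho_A$ and through $R^{a_n}$), which may be taken with $c_1 \leq 1$, such that $c_1\|x\| \leq \rho_A(x) \leq c_2\|x\|^{\gamma}$ for all $x$ with $\rho_A(x) \leq R$, i.e.\ on $\overline{\mathcal{B}^A(e,R)} = \{\rho_A \leq R\}$ (this last identity being standard, from continuity of $\rho_A$ and $\rho_A(\delta_t^A x) = t\rho_A(x)$). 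The second inclusion in~\eqref{inclusionballs} is then immediate, since $c_1 \leq 1$ gives $\mathcal{B}^A(e,c_1 R) \subseteq \overline{\mathcal{B}^A(e,R)}$, where $c_1\|x\| \leq \rho_A(x) < c_1 R$ forces $\|x\| < R$. For the first inclusion, put $\rho^* := (c_1 R/c_2)^{1/\gamma}$: if $\rho_A$ attained a value $\geq R$ on the connected set $\mathcal{B}(e,\rho^*)$, then (since $\rho_A(e) = 0$) by the intermediate value theorem it would equal $R$ at some $x \in \mathcal{B}(e,\rho^*)$, contradicting $R = \rho_A(x) \leq c_2\|x\|^{\gamma} < c_2(\rho^*)^{\gamma} = c_1 R \leq R$; hence $\mathcal{B}(e,\rho^*) \subseteq \mathcal{B}^A(e,R)$, and one further application of $\rho_A \leq c_2\|\cdot\|^{\gamma}$ there improves this to $\mathcal{B}(e,\rho^*) \subseteq \mathcal{B}^A(e,c_1 R)$.

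For~(ii), given $R > 0$, compactness of $\overline{\mathcal{B}(e,R)}$ and continuity of $\rho_A$ provide $R'$ with $\rho_A \leq R'$ on $\overline{\mathcal{B}(e,R)}$; for $x, y \in \overline{\mathcal{B}(e,R)}$, \eqref{eq:triangle} gives $\rho_A(xy) \leq C_0(\rho_A(x) + \rho_A(y)) \leq 2 C_0 R' =: R''$, so $x$, $y$ and $xy$ all belong to $\overline{\mathcal{B}^A(e,R'')}$; applying~(i) with radius $R''$ to sandwich $\|xy\| \lesssim \rho_A(xy) \lesssim \rho_A(x) + \rho_A(y) \lesssim \|x\|^{\gamma} + \|y\|^{\gamma}$ gives~\eqref{eq:triangle2} with a constant depending on $A$, $\rho_A$ and $R$. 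The step I expect to be most delicate is the bookkeeping in the case split of~(i) --- keeping the dependence of $c_1, c_2$ on $(A, \rho_A, R)$ explicit and checking that the one exponent $\gamma = 1/a_n$ works uniformly over all quasi-norms $\rho_A$ and all radii $R$; the topological argument for the left-hand inclusion in~\eqref{inclusionballs} is routine once~(i) is available.
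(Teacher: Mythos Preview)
Your argument is correct and is precisely the one the paper has in mind: part~(i) is the standard comparison from \cite[Proposition~1.5]{FollandStein} obtained via the eigenspace decomposition of $A$ (with $\gamma=1/a_n$), and part~(ii) is deduced from~(i) together with the quasi-triangle inequality~\eqref{eq:triangle}, exactly as the paper indicates. Your handling of the ball inclusions~\eqref{inclusionballs} via the intermediate value theorem and the passage to an equivalent Euclidean norm adapted to the eigenspaces are both fine and only affect the constants.
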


\begin{proof}
Assertion (i) can be proved in the exact same manner as~\cite[Proposition 1.5]{FollandStein}, whereas (ii) follows from a combination of (i) and~\eqref{eq:triangle}.
\end{proof}

\subsection{Atomic decompositions} \label{sec:atomic}
Assume that $A \in \mathrm{GL}(\mathfrak{g})$ is an admissible matrix whose minimum eigenvalue is $1$, and fix an associated homogeneous quasi-norm $\rho_{A}$. Denote by $v_1,\dots, v_n$ the eigenvalues of $A$, listed in increasing order (whence $v_{1}=1)$. Given a multiindex $I = (i_1, \dots, i_n) \in \mathbb{N}_0^n$, we define its \emph{isotropic degree} and \emph{homogeneous degree} (associated to $A$) respectively by
\[
 |I| = i_1 + \dots + i_n , \qquad d_A (I) = v_1 i_1 + \dots + v_n i_n.
\]
We denote by $\Delta_A$ the sub-semigroup of $\mathbb{R}$ generated by $\{0, v_1, \dots, v_n\}$, that is, $\Delta_A = \{ d_{A}(I) \colon I \in \mathbb{N}^n \}$. Note that $ |I| \leq d_A(I)$ and $\mathbb{N} \subseteq \Delta_A$ as $v_1 = 1$.

A function $P \colon G \to \mathbb{C}$ is called a \emph{polynomial} on $G$ if $P \circ \exp_G$ is a polynomial on $\mathfrak{g}$. Fix an eigenbasis $\{X_1, \dots , X_n \}$ for $A$, and let $\{X^*_1, \dots, X^*_n \}$ be the associated dual basis for $\mathfrak{g}^*$. For $j = 1, \dots, n$, set $\eta_{j, A} := X^*_j \circ \exp_G^{-1}$. Then each $\eta_{j,A}$ is a (homogeneous) polynomial on $G$, and every polynomial $P$ on $G$ can be written uniquely as
\begin{align} \label{eq:polynomial_homogeneous}
 P = \sum_{I \in \mathbb{N}_0^n} c_I \eta_A^I, \quad \eta_A^I := \eta_{1, A}^{i_1} \cdots \eta_{n,A}^{i_n},
\end{align}
where all but finitely many of the coefficients $c_I \in \mathbb{C}$ are zero. The \emph{homogeneous degree} (with respect to $A$) of a polynomial $P$ as in~\eqref{eq:polynomial_homogeneous} is defined to be $\max\{ d_A(I) \colon c_I \neq 0 \}$. The set of all polynomials of homogeneous degree at most $N \in \mathbb{N}$ with respect to $A$ is denoted by $\mathcal{P}^A_N$.

Suppose $p \in (0,1]$. An element $\alpha \in \Delta_A$ is said to be $p$-\emph{admissible for $A$} if $\alpha \geq \max \{ \alpha' \in \Delta_A \colon\alpha' \leq \tr(A) (p^{-1} - 1) \}$. A pair $(p,\alpha)$ is said to be \emph{admissible for $A$} if $\alpha$ is $
p$-admissible for $A$. Given such a pair $(p,\alpha)$, we say that a function $a \colon G \to \mathbb{C}$ is a $(p, \alpha)$-\emph{atom} associated to $A$ if it satisfies the conditions:
\begin{enumerate}
\item[(a1)] $\supp a \subseteq \mathcal{B}^{A}(x_{0},r)$ for some $x_{0}\in G$ and $r>0$,
\item[(a2)] $\|a\|_{\infty} \leq \mu(\mathcal{B}^{A}(x_{0},r))^{-\frac{1}{p}}$,
\item[(a3)] $\int_{G}a \cdot P  \, d\mu =0$ for all $P \in \mathcal{P}^A_\alpha$.
\end{enumerate}
We denote by $\mathscr{A}^{p}_{\alpha}(A)$ the family of all $(p,\alpha)$-atoms associated to $A$. 

If  $\alpha \in \Delta_A$ is $p$-admissible, then by~\cite[Theorem 3.30]{FollandStein}, the Hardy space $H^{p}_{A}$ coincides with the space of all tempered distributions $f \in \mathcal{S}'$ of the form
\[
f= \sum\nolimits_{j}\kappa_j a_{j}, \qquad \kappa_j \geq 0, \;\; (\kappa_j) \in \ell^{p}, \;\; a_{j}\in \mathscr{A}^{p}_{\alpha} (A),
\]
with the equivalence of quasi-norms
\begin{equation}\label{atomicnorm}
\|f\|_{H^{p}_{A}}^{p} \approx \|f\|_{H^{p}_{\alpha}(A)}^{p} := \inf \bigg\{ \|(\kappa_j)\|_{\ell^{p}}^{p} \colon f= \sum_{j}\kappa_j a_{j}, \;  a_{j}\in \mathscr{A}^p_{\alpha}(A)\bigg\}.
\end{equation}

Rather than the classical atoms satisfying conditions (a1)--(a3), we will make use of certain ``modified'' atoms.  Given an admissible pair $(p,\alpha)$ for $A$ and $R>0$, we shall call \emph{modified} $(p, \alpha,R)$-atom (associated to $A$) a function $a \colon G \to \mathbb{C}$ such that
\begin{enumerate}
 \item[(a1')] $\supp a \subseteq x_0 \delta_{\e^{k}}^{A} ( \mathcal{B}(e,R))$ for some $x_0 \in G$ and $k \in \mathbb{Z}$;
 \item[(a2')] $\| a \|_\infty \leq \mu(\delta^A_{\e^{k}} \mathcal{B}(e,R))^{ - \frac{1}{p}}$;
 \item[(a3)] $\int_G a \cdot P\, d\mu = 0$ for all $P \in \mathcal{P}^A_\alpha$.
\end{enumerate}

To show the relation between the above Hardy spaces and those defined by such modified atoms, we use Lemma~\ref{lemmaeta} to prove the following simple result.

\begin{lemma} \label{lem:atom}
Suppose $R>0$, let $A \in \mathrm{GL}(\mathfrak{g})$ be admissible with minimum eigenvalue $1$ and $(p,\alpha)$ be admissible for $A$. Then $H^{p}_A$ coincides with the atomic space defined in terms of modified $(p, \alpha,R)$-atoms associated to $A$, with equivalence of quasi-norms.
\end{lemma}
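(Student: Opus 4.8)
The plan is to show the two atomic spaces — the one built from classical $(p,\alpha)$-atoms supported in $\rho_A$-balls, and the one built from modified $(p,\alpha,R)$-atoms supported in sets of the form $x_0\delta_{\e^k}^A(\mathcal{B}(e,R))$ — coincide with equivalent quasi-norms, and then invoke~\eqref{atomicnorm} to identify the first with $H^p_A$. The moment condition (a3) is identical in both definitions, so the entire argument is about reconciling the support and size normalizations (a1)/(a2) versus (a1')/(a2'). The key geometric input is Lemma~\ref{lemmaeta}(i): the inclusions~\eqref{inclusionballs} let us sandwich a $\rho_A$-ball between two Euclidean balls, and conversely (after a dilation) sandwich a Euclidean ball between two $\rho_A$-balls. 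Using~\eqref{dilationball} and~\eqref{measuredilation}, both types of sets centered at the same point with comparable ``size'' have comparable Haar measure, with constants depending only on $A$, $\rho_A$ and $R$ (this is where fixing $R$ matters).

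First I would prove that a modified $(p,\alpha,R)$-atom is, up to a fixed multiplicative constant, a classical $(p,\alpha)$-atom. Given $a$ with $\supp a \subseteq x_0\delta_{\e^k}^A(\mathcal{B}(e,R))$, apply~\eqref{inclusionballs} after translating the problem to the identity: since $\mathcal{B}(e,R)\subseteq \mathcal{B}^A(e,R')$ for some $R'=R'(A,\rho_A,R)$ (the right-hand inclusion in~\eqref{inclusionballs} reversed, which holds because $\overline{\mathcal{B}(e,R)}$ is compact and $\rho_A$ is continuous, bounded on it), one gets $\delta_{\e^k}^A(\mathcal{B}(e,R)) \subseteq \delta_{\e^k}^A(\mathcal{B}^A(e,R')) = \mathcal{B}^A(e,\e^k R')$ by~\eqref{dilationball}, hence $\supp a \subseteq \mathcal{B}^A(x_0,\e^k R')$, giving (a1) with $r=\e^k R'$. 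For (a2), one has $\mu(\mathcal{B}^A(x_0,\e^k R')) = (\e^k R')^{\tr(A)}\mu(\mathcal{B}^A(e,1))$ and $\mu(\delta_{\e^k}^A\mathcal{B}(e,R)) = \e^{k\tr(A)}\mu(\mathcal{B}(e,R))$ by~\eqref{measuredilation}, so the ratio of these two measures is a constant depending only on $A,\rho_A,R$; thus $\|a\|_\infty \leq \mu(\delta_{\e^k}^A\mathcal{B}(e,R))^{-1/p} \leq C^{1/p}\,\mu(\mathcal{B}^A(x_0,\e^k R'))^{-1/p}$, so $C^{-1/p}a$ is a genuine $(p,\alpha)$-atom. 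This shows $H^p_\alpha(A)$ (atomic, in the modified sense) embeds into the classical atomic space with $\|f\|_{H^p_\alpha(A),\mathrm{mod}} \gtrsim \|f\|_{H^p_\alpha(A)}$, hence into $H^p_A$.

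For the reverse direction I would decompose a classical atom into finitely many modified atoms. Given $a$ supported in $\mathcal{B}^A(x_0,r)$, write $r = \e^k$ for the unique $k\in\mathbb{Z}$ with $\e^k \leq r < \e^{k+1}$ (or rather choose $k$ so that $\mathcal{B}^A(e,r) \subseteq \delta_{\e^k}^A\mathcal{B}^A(e,1)$ with bounded overshoot). By~\eqref{inclusionballs}, $\mathcal{B}^A(e,c_1 R_0)$ is contained in a fixed Euclidean ball $\mathcal{B}(e,R_0)$; rescaling, $\mathcal{B}^A(e,1)$ is covered by $\delta_\lambda^I\mathcal{B}(e,R)$ for a suitable fixed $\lambda$, and since Euclidean translates of $\mathcal{B}(e,R)$ at a fixed scale cover any bounded Euclidean set with bounded overlap, one can cover $\delta_{\e^k}^A(\mathcal{B}^A(e,\e^{-k}r)\cdot\text{something})$ — more cleanly: cover $\mathcal{B}^A(e,r)$ by a \emph{bounded} number $N=N(A,\rho_A,R)$ of sets $y_i\delta_{\e^k}^A(\mathcal{B}(e,R))$ (bounded because after the dilation $\delta_{\e^{-k}}^A$ everything lives in a fixed compact set whose covering number by $\delta_{\e^{-k}}^A y_i\,\mathcal{B}(e,R)$... here one must be slightly careful since $\delta^I$ and $\delta^A$ interact, but~\eqref{euclnormcomm} and the homogeneity of $\|\cdot\|$ under $\delta^I$ keep this under control). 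Then take a smooth partition of unity $\{\psi_i\}$ subordinate to this cover and set $a_i = a\psi_i - (\text{correction})$, where the correction is a linear combination of a fixed family of bump functions adapted to the $i$-th set that restores the moment condition (a3) — exactly the Calderón–Zygmund-style modification used in~\cite{FollandStein, bownik2003anisotropic}. Each $a_i$, after dividing by a constant depending only on $N$ and the structural constants, is a modified $(p,\alpha,R)$-atom, and $a = \sum_i a_i$ expresses $a$ as a sum of $\lesssim 1$ modified atoms, yielding $\|f\|_{H^p_\alpha(A),\mathrm{mod}} \lesssim \|f\|_{H^p_\alpha(A)} \asymp \|f\|_{H^p_A}$.

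I expect the main obstacle to be the covering/partition-of-unity step with the moment-preserving correction: one must verify that the number of pieces is bounded uniformly (which is where the fixed $R$ and the compactness in Lemma~\ref{lemmaeta} are essential, together with~\eqref{euclnormcomm} to commute the auxiliary Euclidean dilation past $\delta^A_{\e^k}$), and that the correction terms enforcing the vanishing moments against $\mathcal{P}^A_\alpha$ can be chosen with $L^\infty$-norms controlled by the same normalization — this uses that the moment map on a fixed-size model ball is an isomorphism onto the finite-dimensional space $(\mathcal{P}^A_\alpha)^*$, transported by dilation, so the bounds are scale-invariant. The rest — the inclusions, the measure comparisons, translating between $\e^k$-dilations and arbitrary radii $r$ — is routine bookkeeping using~\eqref{dilationball}, \eqref{measuredilation}, and~\eqref{inclusionballs}, and the passage from the atomic characterization back to $H^p_A$ is immediate from~\eqref{atomicnorm}.
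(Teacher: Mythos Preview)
Your first direction (modified atom $\Rightarrow$ classical atom) is correct and is exactly how the paper argues. The second direction, however, is vastly over-engineered: you propose covering $\mathcal{B}^A(x_0,r)$ by a bounded number of sets $y_i\,\delta^A_{\e^k}\mathcal{B}(e,R)$, building a subordinate partition of unity, and then adding moment-correcting bumps to restore (a3) on each piece. The paper bypasses all of this by noting that \emph{one} such set suffices: choosing $k=\big[\ln\big(\tfrac{r}{Rc_1}\big)\big]+1$, the inclusion $\mathcal{B}^A(e,c_1R)\subseteq\mathcal{B}(e,R)$ from~\eqref{inclusionballs} gives
\[
\mathcal{B}^A(x_0,r)\subseteq x_0\,\delta^A_{\e^k}\mathcal{B}^A(e,c_1R)\subseteq x_0\,\delta^A_{\e^k}\mathcal{B}(e,R),
\]
so the classical atom itself, divided by a fixed constant, is already a single modified atom. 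No covering, no partition of unity, no moment-restoration is needed; the argument is symmetric to your first direction.

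Your approach would in principle go through, but the covering/correction machinery is the kind of step where errors creep in (you yourself flag uncertainty about the interaction of $\delta^I$ and $\delta^A$, and the moment-correction argument on a noncommutative group requires care with the polynomial spaces $\mathcal{P}^A_\alpha$ under translation). The paper's insight---that the two families of supporting sets are mutually nested up to uniform constants, so the atoms themselves match one-to-one---makes the lemma a two-line computation rather than a construction.
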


\begin{proof}
By the equivalence of quasi-norms~\eqref{atomicnorm}, it will be enough to show that any $(p,\alpha)$-atom associated to $A$ is a multiple of a modified $(p,\alpha,R)$ atom associated to $A$, and viceversa, with uniform constants depending only on $p$, $A$ and $R$. As the two proofs are essentially the same, we shall provide the details of the first one only.

Suppose that $a$ is a $(p,\alpha)$-atom associated to $A$, supported in a ball $\mathcal{B}^{A}(x_{0},r)$ for which the size condition (a2) holds. Then, for $k= [\ln( \frac{r}{ R c_{1}})]+1 \in \Z$,
\begin{align*}
\supp a & \subseteq x_{0} \delta_{\frac{r}{Rc_{1}}}^{A}\, \mathcal{B}^{A}(e,c_{1}R) \\
& \subseteq x_{0} \delta_{\e^{k}}^{A}\, \mathcal{B}^{A}(e,c_{1}R) \subseteq x_{0} \delta_{\e^{k}}^{A} \, \mathcal{B}(e,R), \end{align*}
the last inclusion by~\eqref{inclusionballs}. As for the size condition,
\[
\| a \|_\infty \leq \mu(x_{0}\delta^A_{\e^{k}} \mathcal{B}(e,R))^{- \frac{1}{p}} \bigg( \frac{\mu(\mathcal{B}^{A}(x_{0},r))}{ \mu(x_{0}\delta^A_{\e^{k}} \mathcal{B}(e,R))} \bigg)^{- \frac{1}{p}},
\]
where, by left invariance and~\eqref{measuredilation},
\[
\frac{\mu(\mathcal{B}^{A}(x_{0},r ))}{ \mu(x_{0}\delta^A_{\e^{k}} \mathcal{B}(e,R))}  = \frac{r^{\tr (A)}\mu(\mathcal{B}^{A}(e,1))}{ \e^{k\tr (A)}\mu(\mathcal{B}(e,R))} \geq  \bigg(\frac{r}{ \e\, r/(c_{1}R)} \bigg)^{\tr (A) } \frac{\mu(\mathcal{B}^{A}(e,1))}{ \mu(\mathcal{B}(e,R))} \geq C(A,R),
\]
and the conclusion follows.
\end{proof}

Lastly, we define a family of auxiliary functions that we will need in the proof of Theorem~\ref{thm:main}. For this, let $\{Y_1^*, \dots, Y_n^*\}$ be the dual basis for $\mathfrak{g}^*$ of the basis $\{Y_{1},\dots, Y_{n}\}$, and define $\eta_j := Y_j^* \circ \, \exp_G^{-1}$ for $j = 1, \dots, n$. As above, any polynomial $P$ on $G$ can be written uniquely as
\begin{align} \label{eq:polynomial_isotropic}
 \sum_{I \in \mathbb{N}_0^n} c_I \eta^I, \quad \eta^I := \eta^{i_1}_1 \dots \eta^{i_n}_n,
\end{align}
for finitely many nonzero coefficients $c_I \in \mathbb{C}$. The \emph{isotropic degree} of a polynomial $P$ as in~\eqref{eq:polynomial_isotropic} is $\max \{ |I| \colon c_I \neq 0 \}$, and we denote the set of all polynomials of isotropic degree at most $N \in \mathbb{N}$ by $\mathcal{P}_N$. Note that if $P \in \mathcal{P}_N$ and $\Lambda \in \mathrm{GL}(\mathfrak{g})$, then also $P \circ \delta^{\Lambda}_t \in \mathcal{P}_N$ for any $t>0$. Moreover, a change of basis and the fact that  $|I|\leq d_A(I)$ imply $\mathcal{P}^A_N \subseteq \mathcal{P}_N$ for all admissible matrices $A$ whose minimum eigenvalue is $1$.

Let now $A, B \in \mathrm{GL}(\mathfrak{g})$ be admissible with minimum eigenvalue $1$. For $p \in (0,1]$, choose $\alpha \in \mathbb{N}$ so large that $(p, \alpha)$ is admissible for both $A$ and $B$. Given $R>0$, consider the family $\mathcal{F}_{\alpha, p,R} (A,B)$ of functions $f \colon G \to \mathbb{C}$ such that
\begin{enumerate}
 \item[(f1)] $\supp f \subseteq x_0 \delta^A_{\e^{j_1}} \delta^B_{\e^{j_2}} \mathcal{B}(e,R)$ for some $x_0 \in G$ and $j_1, j_2 \in \mathbb{Z}$.
 \item[(f2)] $\| f \|_{\infty} \leq \e^{- j_1 \tr(A)/p} \e^{- j_2 \tr(B)/p}$.
 \item[(f3)] $\int_G f \cdot P \, d\mu = 0$ for all $P \in \mathcal{P}_{\alpha}$.
\end{enumerate}
The significance of this family for our purposes is provided by the following lemma.

\begin{lemma}\label{lemmaunifbound}
 Let $A, B \in \mathrm{GL}(\mathfrak{g})$ be admissible matrices with minimum eigenvalue~$1$. Suppose $H^p_A = H^p_B$ for some $p \in (0,1]$ and that $\alpha \in \mathbb{N}$  is such that $(p, \alpha)$ is admissible for both $A$ and $B$. Then for all $R>0$ there is a constant $C'> 0$ such that $\| f \|_{H^p_A} \leq C'$ for all $f \in \mathcal{F}_{\alpha, p,R}(A,B)$.
\end{lemma}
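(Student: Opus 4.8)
The plan is to show that every $f \in \mathcal{F}_{\alpha,p,R}(A,B)$ becomes, after a left translation and an $A$-dilation, a bounded multiple of a single modified $(p,\alpha,R)$-atom associated to $B$; since such an atom has bounded $H^p_B$-quasi-norm by Lemma~\ref{lem:atom} and~\eqref{atomicnorm}, and $\|\cdot\|_{H^p_A} \asymp \|\cdot\|_{H^p_B}$ on $H^p_A = H^p_B$ by Lemma~\ref{lem:equivalentnorms}, the estimate transfers to $\|f\|_{H^p_A}$. Two preliminary facts are needed. First, $\|\cdot\|_{H^p_A}$ is invariant under left translations and under the dilations $D^{A,p}_t$: writing $L_g h(x) = h(g^{-1}x)$, a direct computation with~\eqref{eq:radial} gives $M^0_{\phi,A}(L_g h) = L_g(M^0_{\phi,A}h)$ and, using that $\delta^A_t$ is an automorphism together with~\eqref{measuredilation}, $M^0_{\phi,A}(D^{A,p}_t h) = D^{A,p}_t(M^0_{\phi,A}h)$; since $L_g$ preserves $\|\cdot\|_p$ by left-invariance of $\mu$, and $D^{A,p}_t$ preserves $\|\cdot\|_p$ by~\eqref{measuredilation} and the normalisation exponent $\tr(A)/p$, the two invariances follow, and likewise for $B$. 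Second, I would use the standard fact that the space $\mathcal{P}^B_\alpha$ of polynomials of homogeneous degree at most $\alpha$ with respect to $B$ is invariant under left translations of $G$ (see, e.g., \cite{FollandStein}), while $\mathcal{P}^B_\alpha \subseteq \mathcal{P}_\alpha$ and $\mathcal{P}_\alpha$ is invariant under every $\delta^\Lambda_t$, as recorded in Section~\ref{sec:atomic}.

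Next, fix $f \in \mathcal{F}_{\alpha,p,R}(A,B)$, say $\supp f \subseteq x_0 \delta^A_{\e^{j_1}}\delta^B_{\e^{j_2}}\mathcal{B}(e,R)$ with $\|f\|_\infty \le \e^{-j_1\tr(A)/p}\e^{-j_2\tr(B)/p}$ and $\int_G f\cdot P\,d\mu = 0$ for all $P \in \mathcal{P}_\alpha$; we may assume $f \not\equiv 0$. Since $\delta^A_{\e^{j_1}}$ is a group automorphism, the function $b(w) := f(x_0 \delta^A_{\e^{j_1}}w)$ is supported in $\delta^B_{\e^{j_2}}\mathcal{B}(e,R)$ and satisfies $\|b\|_\infty = \|f\|_\infty$. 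Changing variables $z = x_0\delta^A_{\e^{j_1}}w$ (so that $d\mu(w) = \e^{-j_1\tr(A)}d\mu(z)$) and using the automorphism identity $\delta^A_{\e^{-j_1}}(x_0^{-1}z) = g_0^{-1}\,\delta^A_{\e^{-j_1}}(z)$ with $g_0 := \delta^A_{\e^{-j_1}}(x_0)$, I obtain for every polynomial $Q$
\[
 \int_G b\cdot Q\,d\mu = \e^{-j_1\tr(A)}\int_G f\cdot\big[(L_{g_0}Q)\circ\delta^A_{\e^{-j_1}}\big]\,d\mu .
\]
If $Q \in \mathcal{P}^B_\alpha$, then $L_{g_0}Q \in \mathcal{P}^B_\alpha \subseteq \mathcal{P}_\alpha$ by translation-invariance, hence $(L_{g_0}Q)\circ\delta^A_{\e^{-j_1}} \in \mathcal{P}_\alpha$, so the right-hand side vanishes by (f3). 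Together with $\supp b \subseteq e\cdot\delta^B_{\e^{j_2}}\mathcal{B}(e,R)$ and, by~\eqref{measuredilation}, $\|b\|_\infty \le \e^{-j_1\tr(A)/p}\mu(\mathcal{B}(e,R))^{1/p}\,\mu(\delta^B_{\e^{j_2}}\mathcal{B}(e,R))^{-1/p}$, this shows that $b = \lambda_0 a$, where $\lambda_0 := \|b\|_\infty\,\mu(\delta^B_{\e^{j_2}}\mathcal{B}(e,R))^{1/p}$ and $a := \lambda_0^{-1}b$ is a modified $(p,\alpha,R)$-atom associated to $B$ (namely (a1') and (a2') hold with $x_0' = e$, $k = j_2$, and (a3) holds for $B$), and $0 < \lambda_0 \le \e^{-j_1\tr(A)/p}\mu(\mathcal{B}(e,R))^{1/p}$.

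To conclude, I would reassemble: since $\delta^A_{\e^{j_1}}$ is an automorphism, $f = \e^{j_1\tr(A)/p}D^{A,p}_{\e^{-j_1}}(L_{g_0}b)$, so the two invariances yield $\|f\|_{H^p_A} = \e^{j_1\tr(A)/p}\|b\|_{H^p_A}$. Using $H^p_A = H^p_B$ and Lemma~\ref{lem:equivalentnorms}, $\|b\|_{H^p_A} \lesssim \|b\|_{H^p_B} = \lambda_0\,\|a\|_{H^p_B}$; and since $a$ is a single modified $(p,\alpha,R)$-atom associated to $B$, Lemma~\ref{lem:atom} together with~\eqref{atomicnorm} gives $\|a\|_{H^p_B} \lesssim 1$. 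Combining the bounds,
\[
 \|f\|_{H^p_A} \lesssim \e^{j_1\tr(A)/p}\lambda_0 \lesssim \mu(\mathcal{B}(e,R))^{1/p},
\]
with implied constants depending only on $p$, $A$, $B$, $R$ (and on the fixed quasi-norms and approximate identities), but not on $f$. This proves the lemma with $C'$ a suitable multiple of $\mu(\mathcal{B}(e,R))^{1/p}$.

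I expect the delicate step to be precisely the moment bookkeeping of the second paragraph. A left translation does not preserve the isotropic degree of polynomials once $G$ is nonabelian, so after translating, $f$ no longer has vanishing moments against all of $\mathcal{P}_\alpha$; but the isotropic condition (f3) is exactly strong enough that, after \emph{also} undoing the $A$-dilation, the resulting $b$ still has vanishing moments against the $B$-homogeneous class $\mathcal{P}^B_\alpha$, which is what makes $b$ a genuine modified $B$-atom. This is why (f3) is imposed for all $P \in \mathcal{P}_\alpha \supseteq \mathcal{P}^A_\alpha \cup \mathcal{P}^B_\alpha$ rather than merely for $P$ in $\mathcal{P}^A_\alpha$ or $\mathcal{P}^B_\alpha$, and it is the place where the choice of an $\alpha$ admissible for both $A$ and $B$ is used.
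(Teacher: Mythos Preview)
Your proof is correct and follows essentially the same strategy as the paper's: undo the dilations using that $D^{A,p}_t$ (resp.\ $D^{B,p}_t$) is an $H^p_A$- (resp.\ $H^p_B$-) isometry, verify that the result is a modified $(p,\alpha,R)$-atom for $B$, and transfer via the norm equivalence of Lemma~\ref{lem:equivalentnorms}. The only notable difference is that the paper applies $D^{B,p}_{\e^{j_2}} D^{A,p}_{\e^{j_1}}$ directly to $f$ without translating, obtaining a modified atom centered at $\delta^B_{\e^{-j_2}}\delta^A_{\e^{-j_1}}(x_0)$ with $k=0$; this makes the moment check immediate from the dilation-invariance of $\mathcal{P}_\alpha$ alone, whereas your preliminary left translation forces you to invoke the (correct, but additional) fact that $\mathcal{P}^B_\alpha$ is translation-invariant.
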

\begin{proof}
Suppose $f \in \mathcal{F}_{\alpha, p,R}(A,B)$. First, by~(f1), the function $D^{B,p}_{\e^{j_2}} D^{A,p}_{\e^{j_1}} f$ is supported in
 \[
  \delta_{\e^{-j_1}}^A \delta_{\e^{-j_2}}^B(x_0)\,  \mathcal{B}(e,R).
 \]
Second, by (f2),
\[
 \| D^{B, p}_{\e^{j_2}} D^{A,p}_{\e^{j_1}} f \|_{\infty} = \e^{\frac{j_2 \tr(B)}{p}} \e^{\frac{j_1 \tr(A)}{p}}  \| f \|_{\infty} \leq 1.
\]
Lastly, given $P \in \mathcal{P}_{\alpha}$, it follows by (f3) that
\begin{equation}\label{orthogonality}
 \int D^{B,p}_{\e^{j_2}} D^{A,p}_{\e^{j_1}} f \cdot P \, d\mu = \e^{(\frac{1}{p} - 1) j_2 \tr(B)} \e^{(\frac{1}{p} - 1) j_1 \tr(A)} \int  f \cdot P \circ \delta^B_{\e^{-j_2}} \circ \delta^A_{\e^{-j_1}} \, d\mu= 0.
\end{equation}
Hence, the function $\mu (\mathcal{B}(e,R))^{\frac{1}{p}} \cdot D^{B,p}_{\e^{j_2}} D^{A,p}_{\e^{j_1}} f$ is a modified $(p,\alpha,R)$-atom for both $A$ and $B$. By Lemma~\ref{lem:equivalentnorms} and the fact that $D_t^{A,p}$ (resp.\ $D_t^{B,p})$ is an isometry on $H^p_A$ (resp.\ $H^p_B$) gives
\[
 \| f \|_{H^p_A} = \| D_{\e^{j_1}}^{A,p} f \|_{H^p_A} \lesssim \| D_{\e^{j_1}}^{A,p} f \|_{H^p_B} = \| D_{\e^{j_2}}^{B,p} D_{\e^{j_1}}^{A,p} f \|_{H^p_B}
\]
with an implicit constant independent of $f$. Since $\mu (\mathcal{B}(e,R))^{\frac{1}{p}} \cdot D^{B,p}_{\e^{j_2}} D^{A,p}_{\e^{j_1}} f$ is a multiple of an ordinary $(p, \alpha)$-atom for $B$ (cf.~the proof of Lemma~\ref{lem:atom}), an application of \cite[Theorem 2.9]{FollandStein} yields
\[
 \| D_{\e^{j_2}}^{B,p} D_{\e^{j_1}}^{A,p} f \|_{H^p_B} \lesssim \mu (\mathcal{B}(e,R))^{-1/p} \lesssim 1
\]
where the constants are independent of $f$.
\end{proof}

\subsection{Grand maximal function}
Let $A$ be an admissible matrix with minimum eigenvalue $1$. Given $N \in \mathbb{N}$, the \emph{grand maximal function} associated to the radial maximal function~\eqref{eq:radial} is defined by
\[
 \mathcal{M}^{0}_{(N),A} f  \; = \sup_{ \substack{\phi \in \mathcal{S}, \| \phi \|_{(N)} \leq 1}} M_{\phi,A}^{0} f,
\]
where $\| \phi \|_{(N)}$ is a semi-norm on $\mathcal{S}$ associated to $A$. By \cite[Proposition 2.8 and Theorem 3.30]{FollandStein}, $H^p_A$ with $p \in (0,1]$ can be characterized as the space of $f \in \mathcal{S}'$ such that $\mathcal{M}_{(N),A}^{0} f \in L^{p}$, with the equivalence of semi-norms
\begin{equation}\label{grandmax}
 \| f \|_{H^{p}_{A}}^{p} \asymp \|\mathcal{M}_{(N),A}^{0} f   \|_{p}^{p},
\end{equation}
provided that $N \geq \min\{ N' \in \mathbb{N} \colon N' \geq \min \{ \alpha \in \Delta_A \colon\alpha > \tr(A) (p^{-1} - 1) \} \}$.

\section{Equivalence of Hardy spaces}\label{sec:equivhardy}
This section is devoted to proving Theorem~\ref{thm:main}. We start with the following lemma.

\begin{lemma} \label{lem:multiple_matrix}
Let $A, B \in \mathrm{GL}(\mathfrak{g})$ be admissible and $\eps = \tr(A) / \tr(B)$. Then the following assertions are equivalent:
\begin{enumerate}
 \item[(i)] $A = c B$ for some $c>0$;
 \item[(ii)] $\sup_{j \in \mathbb{Z}} \| \exp(A)^{-j} \exp(B)^{\lfloor \eps j \rfloor} \|_{\mathrm{GL}(\mathfrak{g})} $ is finite.
\end{enumerate}
\end{lemma}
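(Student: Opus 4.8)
The implication (i)$\Rightarrow$(ii) is easy: if $A = cB$, then $\tr(A) = c\,\tr(B)$, so $\eps = c$, and hence
\[
 \exp(A)^{-j}\exp(B)^{\lfloor \eps j\rfloor} = \exp(-jcB)\exp(\lfloor cj\rfloor B) = \exp\big((\lfloor cj\rfloor - cj)B\big).
\]
Since $t := \lfloor cj\rfloor - cj \in (-1,0]$ is bounded independently of $j$, and $r \mapsto \exp(rB)$ is continuous, its operator norm stays in the compact range $\{\exp(tB) : t \in [-1,0]\}$, giving a uniform bound. So the content of the lemma is the reverse implication (ii)$\Rightarrow$(i).

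For (ii)$\Rightarrow$(i), the plan is to exploit the rigidity statement of Proposition~\ref{prop:equivalent_norm}: it suffices to produce admissible matrices whose associated homogeneous quasi-norms are equivalent and deduce equality. More precisely, set $A' = A/\tr(A)$ and $B' = B/\tr(B)$, so that $A'$ and $B'$ are admissible with $\tr(A') = \tr(B') = 1$; by Lemma~\ref{lem:scaling_invariant} (or rather its underlying dilation identity) it is equivalent to show $A' = B'$, which by Proposition~\ref{prop:equivalent_norm} is equivalent to $\rho_{A'}\asymp\rho_{B'}$ for associated quasi-norms. Note $\delta^{A}_{r} = \delta^{A'}_{r^{\tr(A)}}$ and $\delta^{B}_{r} = \delta^{B'}_{r^{\tr(B)}}$, and with $\eps = \tr(A)/\tr(B)$ one has $\exp(A)^{-j}\exp(B)^{\lfloor\eps j\rfloor} = \exp(-j\tr(A)A')\exp(\lfloor\eps j\rfloor\tr(B)B')$; the exponents $-j\tr(A)$ and $\lfloor\eps j\rfloor\tr(B)$ differ by a bounded amount (namely $(\lfloor\eps j\rfloor - \eps j)\tr(B) \in (-\tr(B),0]$), so condition (ii) says precisely that $\sup_{j}\|\exp(sA')\exp(-sB')\|_{\mathrm{GL}(\mathfrak{g})} < \infty$ along the sequence $s = j\tr(A)$, and by a continuity/interpolation argument (inserting the bounded factor $\exp(tB')$, $t\in(-\tr(B),0]$) along all of $s \in \mathbb{R}$. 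Equivalently, $\sup_{r>0}\|\delta^{A'}_{r}\delta^{B'}_{1/r}\|_{\mathrm{GL}(\mathfrak{g})} < \infty$, i.e. $\sup_{r>0}\sup_{x\neq e}\|\delta^{A'}_r x\|/\|\delta^{B'}_r x\| < \infty$ exactly as in~\eqref{eq:equiv_discrete_continuous}. Running the argument with $A'$ and $B'$ interchanged — which is symmetric since (ii) with the roles swapped amounts to $\sup_j\|\exp(B)^{-j}\exp(A)^{\lfloor j/\eps\rfloor}\|_{\mathrm{GL}(\mathfrak{g})}<\infty$, again a bounded perturbation of the inverse of the original expression — gives the reverse supremum bound, hence $\|\delta^{A'}_r x\| \asymp \|\delta^{B'}_r x\|$ uniformly in $r$ and $x$.

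From the uniform equivalence $\|\delta^{A'}_r x\|\asymp\|\delta^{B'}_r x\|$ I would then recover $\rho_{A'}\asymp\rho_{B'}$ by a compactness argument mirroring the one in the proof of Proposition~\ref{prop:equivalent_norm}: for $x$ with $\rho_{A'}(x)=1$, write $x = \delta^{A'}_{\rho_{A'}(y)^{-1}}$-type normalizations are not available since $\|\cdot\|$ is not homogeneous for $A'$, so instead I fix $x$ on the compact sphere $\{\rho_{A'}=1\}$, use that $\delta^{A'}_{1/r}$ maps it to $\{\rho_{A'}=1/r\}$ and that $\|\delta^{B'}_{1/r}\delta^{A'}_r\|_{\mathrm{GL}}$ is bounded, together with $\|\delta^{A'}_r x\|\asymp\|\delta^{B'}_r x\|$, to control $\rho_{B'}$ on $\delta^{A'}_{1/r}\{\rho_{A'}=1\}$ from above and below by the continuity of $\rho_{B'}$ and compactness of $\overline{\mathcal{B}(e,R)}$; this yields $\rho_{B'}\lesssim\rho_{A'}$ and, symmetrically, the reverse, hence $\rho_{A'}\asymp\rho_{B'}$. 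Then Proposition~\ref{prop:equivalent_norm} forces $A'=B'$, i.e. $A = (\tr(A)/\tr(B))B$, which is (i).

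The main obstacle I anticipate is the passage from the discrete bound in (ii) (indexed by $j\in\mathbb Z$, with the floor function $\lfloor\eps j\rfloor$ creating a mismatch between the two exponents) to a genuinely continuous bound of the form $\sup_{r>0}\|\delta^{A'}_r\delta^{B'}_{1/r}\|_{\mathrm{GL}(\mathfrak g)}<\infty$ — one must absorb the $O(1)$ discrepancy $(\lfloor\eps j\rfloor-\eps j)\tr(B)$ via a compact family of correction operators and then fill in the gaps between the points $s=j\tr(A)$, using that $r\mapsto\exp(rB')$ has bounded operator norm on any compact interval. A secondary subtlety is checking that condition (ii) is genuinely symmetric under $A\leftrightarrow B$ up to bounded factors (so that both inequalities in the norm equivalence are available from the single hypothesis), which again reduces to the observation that $\exp(A)^{j}\exp(B)^{-\lfloor\eps j\rfloor}$ is the inverse of a uniformly bounded family and reindexing $j\mapsto\lfloor\eps j\rfloor$ only perturbs exponents boundedly. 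Once these continuity bookkeeping points are handled, the rest is the compactness argument already rehearsed in Section~\ref{sec:norms}.
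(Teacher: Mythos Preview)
Your (i)$\Rightarrow$(ii) matches the paper. For (ii)$\Rightarrow$(i) the paper takes a much shorter route: it rescales to $B' = \eps B$ so that $\tr(A) = \tr(B')$, observes that (ii) then reads $\sup_j \|\exp(A)^{-j}\exp(B')^{j}\| < \infty$ (the floor discrepancy being absorbed exactly as you do), and applies \cite[Theorem~7.9]{cheshmavar2020classification} directly to conclude $\exp(A) = \exp(B')$, hence $A = \eps B$. Your plan instead passes through $\rho_{A'}\asymp\rho_{B'}$ and then invokes Proposition~\ref{prop:equivalent_norm}, which in turn already rests on that same Theorem~7.9; so your detour adds the extra step of manufacturing a quasi-norm equivalence without gaining anything, since the external input is identical.

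That said, your route can be made to work, but there is one genuine gap. You need the \emph{two}-sided bound $\sup_r\|\delta^{A'}_{1/r}\delta^{B'}_r\|<\infty$ and $\sup_r\|\delta^{B'}_{1/r}\delta^{A'}_r\|<\infty$ to get $\rho_{A'}\asymp\rho_{B'}$, and you justify the second by saying it is ``the inverse of a uniformly bounded family''. But a uniformly bounded family of invertible matrices need not have uniformly bounded inverses. What saves you here is that $\det\big(\exp(-sA')\exp(sB')\big) = \e^{-s\tr(A')+s\tr(B')} = 1$ (equivalently, $\det M_j = \e^{(\lfloor\eps j\rfloor-\eps j)\tr(B)}$ is bounded away from $0$), so the cofactor formula bounds the inverse by a polynomial in the entries. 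You should state this explicitly; without it the symmetry step is unjustified. The discrete-to-continuous interpolation you flag as the ``main obstacle'' is, by contrast, completely routine (write $s = j\tr(A) + t$ with $t$ in a compact interval and absorb $\exp(tA')$, $\exp(tB')$). Once the inverse bound is in place, your compactness argument for $\rho_{B'}\lesssim\rho_{A'}$ is correct: for $x\neq e$ write $y=\delta^{A'}_{1/r}x$ with $r=\rho_{A'}(x)$, so $y$ lies on the compact sphere $\{\rho_{A'}=1\}$, and then $\rho_{B'}(x)=r\,\rho_{B'}(\delta^{B'}_{1/r}\delta^{A'}_r y)$ with $\|\delta^{B'}_{1/r}\delta^{A'}_r y\|$ uniformly bounded, hence $\rho_{B'}(x)\lesssim r$.
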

\begin{proof}
Suppose that $A = cB$ for $c>0$. Then $\eps = c$, and hence, for $j \in \mathbb{Z}$,
\[ \exp(A)^{-j} \exp(B)^{\lfloor \eps j \rfloor} = \exp \big( (-j + \lfloor  jc \rfloor 1/c ) A) =: \exp(r_j A)
\]
where $-1/c \leq r_j \leq 0$. Therefore,
\[
 \sup_{j \in \mathbb{Z}} \| \exp(A)^{-j} \exp(B)^{\lfloor \eps j \rfloor} \|_{\mathrm{GL}(\mathfrak{g})}  \, \leq \sup_{- 1/c \leq r \leq 0} \| \exp(r A)  \|_{\mathrm{GL}(\mathfrak{g})} < \infty,
\]
the last fact as $r \mapsto \exp(rA)$ is continuous.

Conversely, suppose that $\exp(A)$ and $\exp(B)$ satisfy (ii). Define the matrix $B' = \frac{\tr(A)}{\tr(B)} B$. Then $\exp(A)$ and $\exp(B')$ satisfy (ii), and $\det(\exp(A)) = \det(\exp(B'))$. Therefore, an application of \cite[Theorem 7.9]{cheshmavar2020classification} yields that $\exp(A) = \exp(B')$. Since the matrix $\exp(A) = \exp(B')$ has only strictly positive eigenvalues, it has a unique real logarithm (cf.~\cite[Theorem 1.31]{higham2008functions}), whence $A = B'$.
\end{proof}

We are now ready to prove Theorem~\ref{thm:main}, which we restate for the reader's convenience. The overall method of the proof is inspired by that of~\cite[Theorem 10.5]{bownik2003anisotropic}.

\begin{theorem} \label{thm:classification_hardy}
 Let $A, B \in \mathrm{GL}(\mathfrak{g})$ be admissible. Then the following are equivalent:
\begin{enumerate}
\item[(i)] $H^p_A=H^p_B$ for some $p \in (0, 1]$;
\item[(ii)] $H^p_A = H^p_B$ for all $p \in (0,1]$;
\item[(iii)] $A= c B$ for some $c > 0$.
\end{enumerate}
\end{theorem}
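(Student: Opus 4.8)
The plan is to prove the cycle of implications (iii) $\Rightarrow$ (ii) $\Rightarrow$ (i) $\Rightarrow$ (iii). The implication (ii) $\Rightarrow$ (i) is trivial, and (iii) $\Rightarrow$ (ii) is exactly Lemma~\ref{lem:scaling_invariant} (scaling the dilation matrix does not change $H^p_A$, even as a set, with equality of quasi-norms). Thus the entire content is the implication (i) $\Rightarrow$ (iii): assuming $H^p_A = H^p_B$ for a single $p \in (0,1]$, we must produce $c > 0$ with $A = cB$. By Lemma~\ref{lem:multiple_matrix}, it suffices to verify the quantitative condition
\[
 \sup_{j \in \mathbb{Z}} \bigl\| \exp(A)^{-j} \exp(B)^{\lfloor \eps j \rfloor} \bigr\|_{\mathrm{GL}(\mathfrak{g})} < \infty, \qquad \eps = \tr(A)/\tr(B).
\]
By Remark~\ref{rem:FS} we may and do assume both $A$ and $B$ have minimum eigenvalue $1$; note this is compatible with needing the \emph{exponent} $\eps$, since rescaling changes $\tr(A)$ and $\tr(B)$ by the same relevant ratios only if $A,B$ are already proportional — so I should be careful and instead keep the rescaling normalization but track $\eps$ honestly, or equivalently run the argument at the level of the matrices $\exp(\ln(r)A)$ and relate the discrete sup to a continuous one as in~\eqref{eq:equiv_discrete_continuous}.

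The heart of the argument is to translate the set-theoretic equality $H^p_A = H^p_B$ into a uniform boundedness statement for the family of ``mixed'' building blocks $\mathcal{F}_{\alpha,p,R}(A,B)$, which is precisely what Lemma~\ref{lemmaunifbound} delivers: fixing $\alpha \in \mathbb{N}$ large enough that $(p,\alpha)$ is admissible for both matrices, there is $C' = C'(R)$ with $\|f\|_{H^p_A} \leq C'$ for every $f \in \mathcal{F}_{\alpha,p,R}(A,B)$. The strategy, following Bownik, is to take a single well-chosen test function $g$ — a smooth bump supported near $e$ with enough vanishing moments (so that $g$ itself is, up to a fixed constant, a modified $(p,\alpha,R)$-atom, or more precisely so that suitable dilates of it land in $\mathcal{F}$) — and to apply the uniform bound to the dilated and translated copies
\[
 f_j := \text{(normalization)} \cdot g \circ \bigl(\delta^B_{\e^{-\lfloor \eps j\rfloor}} \circ \delta^A_{\e^{j}}\bigr), \qquad j \in \mathbb{Z},
\]
which by construction satisfy (f1)--(f3) for appropriate $j_1 = j$, $j_2 = -\lfloor \eps j\rfloor$ once one checks the support condition
\[
 \delta^A_{\e^{j}}\delta^B_{\e^{-\lfloor\eps j\rfloor}}\mathcal{B}(e,r_0) \subseteq \mathcal{B}(e,R)
\]
holds \emph{uniformly in $j$} — and this support condition is exactly an avatar of the matrix norm bound we are trying to prove. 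Conversely, $\|f_j\|_{H^p_A} \geq \|f_j\|_{L^1}$ (or one compares with the radial maximal function from below using that $g$ has nonzero integral against some fixed Schwartz function), and a lower bound for $\|f_j\|_{H^p_A}$ that blows up unless the supports stay controlled forces the finiteness of $\sup_j \|\exp(A)^{-j}\exp(B)^{\lfloor\eps j\rfloor}\|_{\mathrm{GL}(\mathfrak{g})}$. I expect the implementation to split into: (1) choosing $g$ and checking its atomic/moment properties using Lemma~\ref{lemmaeta} to pass between $\|\cdot\|$-balls and $\rho_A$-, $\rho_B$-balls; (2) the upper bound via Lemma~\ref{lemmaunifbound}; (3) the lower bound, most cleanly via the embedding $H^p_A \hookrightarrow \mathcal{S}'$ from Remark~\ref{rem:FS}(b) tested against a fixed Schwartz function, giving $|\langle f_j, \psi\rangle| \lesssim \|f_j\|_{H^p_A}$, while a direct computation of $\langle f_j,\psi\rangle$ shows it stays bounded below (in modulus) as long as the support of $f_j$ does not escape to infinity, and is forced to if it does.

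Assembling these, the finiteness of the continuous supremum
\[
 \sup_{r>0}\ \bigl\| \exp(\ln(r)A)\,\exp\bigl(-\textstyle\frac{\tr(A)}{\tr(B)}\ln(r)B\bigr)\bigr\|_{\mathrm{GL}(\mathfrak{g})}
\]
follows, which by the discretization inequality of the type~\eqref{eq:equiv_discrete_continuous} gives condition~(ii) of Lemma~\ref{lem:multiple_matrix}; invoking that lemma yields $A = cB$, completing the cycle. The main obstacle, I expect, is the lower bound step and the bookkeeping that ties ``support of $f_j$ escapes to infinity in the Euclidean sense'' precisely to ``$\|\exp(A)^{-j}\exp(B)^{\lfloor\eps j\rfloor}\|_{\mathrm{GL}(\mathfrak{g})} \to \infty$'': the noncommutativity of $G$ means the dilations $\delta^A$ and $\delta^B$ no longer act as honest linear maps on a common vector space after transport through $\exp_G$, so one must work on $\mathfrak{g}$ via $\exp_G^{-1}$ (where $\delta^A, \delta^B$ \emph{are} the linear maps $\exp(\ln r\,A)$) and carefully control the distortion between $\|\cdot\|$ on $G$ and the Euclidean norm on $\mathfrak{g}$ — the content of Lemma~\ref{lemmaeta} — together with the fact, recorded in~\eqref{euclnormcomm}, that the isotropic dilations $\delta^I_t$ commute with everything, which is what lets one reduce the support computation to unit vectors and then to a uniform operator-norm estimate.
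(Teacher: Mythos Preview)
Your overall architecture is right: the cycle (iii)$\Rightarrow$(ii)$\Rightarrow$(i)$\Rightarrow$(iii), the reduction of (i)$\Rightarrow$(iii) to the matrix-norm criterion of Lemma~\ref{lem:multiple_matrix}, and the use of Lemma~\ref{lemmaunifbound} as a uniform upper bound on the family $\mathcal{F}_{\alpha,p,R}(A,B)$ all match the paper. The gap is in the lower-bound step, and it is a real one. First, a small but telling misreading: condition (f1) does \emph{not} ask that $\supp f_j$ sit inside a fixed Euclidean ball $\mathcal{B}(e,R)$; it asks for containment in the \emph{dilated} ball $x_0\,\delta^A_{\e^{j_1}}\delta^B_{\e^{j_2}}\mathcal{B}(e,R)$, which your $f_j$ satisfy automatically. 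So membership in $\mathcal{F}_{\alpha,p,R}(A,B)$ is free, and the ``support condition'' you worry about is not the obstruction. The actual obstruction is that with your choice $j_1=j$, $j_2=-\lfloor\eps j\rfloor$ and $\eps=\tr(A)/\tr(B)$, the determinant of the mixed dilation is $\asymp 1$, so $\|f_j\|_\infty\asymp 1$, $\mu(\supp f_j)\asymp 1$, and hence $\|f_j\|_{L^q}\asymp 1$ for every $q$; the pairing $\langle f_j,\psi\rangle$ with a fixed $\psi\in\mathcal{S}$ is likewise bounded (and in fact tends to be small, not large, when the operator norm blows up, since $g$ has vanishing moments). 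Neither the $\mathcal{S}'$-embedding nor the inequality $\|f_j\|_{H^1_A}\geq\|f_j\|_{L^1}$ (which is only available for $p=1$ anyway) produces a quantity that diverges, so you get no contradiction.

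The paper resolves this by a genuinely different construction. Arguing by contradiction that the sup in Lemma~\ref{lem:multiple_matrix} is infinite, one introduces for each $j$ an integer shift $d_j$ chosen so that the \emph{operator norm} of $\exp(Q_j):=\exp(A)^{j}\exp(B)^{-\lfloor\eps j\rfloor-d_j}$ is $\asymp 1$; the price is that $d_j\to\infty$ and hence $|\det\exp(Q_j)|\to 0$. One then builds $a_0$ not as a smooth bump but as a two-piece function, constant equal to $\omega_0$ on a ball $\mathcal{B}(x_1,\epsilon)$ away from the origin and chosen on $\mathcal{B}(e,\theta)$ so that all moments up to $\alpha$ vanish, and sets $a_j=D^{Q_j,p}_{\e^{-1}}D^{U_j,p}_{\e^{-1}}a_0$ with a unitary $U_j$ aligning $x_1$ with the direction realizing $\|\exp(Q_j)\|$. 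These $a_j$ lie in $\mathcal{F}_{\alpha,p,R}(A,B)$ with $j_2=-\lfloor\eps j\rfloor-d_j$. For $p<1$, a direct lower bound on the radial maximal function at points near the image of the distinguished bump yields $M^0_{\phi,A}a_j\gtrsim \e^{(\frac{1}{p}-1)\tr(B)\,d_j}\to\infty$, contradicting Lemma~\ref{lemmaunifbound}; the factor $\frac{1}{p}-1>0$ is exactly what makes the determinant collapse visible in $H^p$. For $p=1$ this exponent vanishes, and the paper instead shows (passing to a subsequence) that $a_j$ converges weakly to a nonzero singular measure $a$ which, by Fatou and the grand maximal characterization, lies in $H^1_A$ --- contradicting $H^1_A\subseteq L^1$. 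Both the $d_j$-shift and the case split $p=1$ versus $p<1$ are essential ideas missing from your outline.
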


\begin{proof}
By Lemma~\ref{lem:scaling_invariant}, (iii) implies (ii). The fact that (ii) implies (i) is immediate. Hence, it remains to show that (i) implies (iii).

Suppose that $H^p_A = H^p_B$ for some $p \in (0,1]$. By rescaling $A$ and $B$ if necessary, it may be assumed (cf. Lemma~\ref{lem:scaling_invariant}) that both $A$ and $B$ have minimum eigenvalue $1$. Then $A = c B$ if and only if $c = 1$.
We argue by contradiction, and suppose that $A \neq B$. Then, by Lemma~\ref{lem:multiple_matrix}, either
\[
 \limsup_{j \to +\infty} \big\| \exp(A)^j \exp(B)^{-\lfloor \varepsilon j \rfloor} \big\| = \infty, \quad \mbox{or} \quad  \limsup_{j\to -\infty} \big\| \exp(A)^j \exp(B)^{-\lfloor \varepsilon j \rfloor} \big\| =\infty,
\]
where in this proof we simply write $\| \cdot \| = \| \cdot \|_{\mathrm{GL}(\mathfrak{g})}$ for the operator norm.
Up to passing to a subsequence, we can assume that actually either
\begin{equation}\label{limbc}
 \lim_{j \to +\infty} \big\| \exp(A)^j \exp(B)^{-\lfloor \varepsilon j \rfloor} \big\| = \infty, \quad \mbox{or} \quad  \lim_{j\to -\infty} \big\| \exp(A)^j \exp(B)^{-\lfloor \varepsilon j \rfloor} \big\| =\infty.
\end{equation}
Note that, for fixed $j \in \mathbb{Z}$,
\[
 \big\| \exp(A)^j  \exp(B)^{-\lfloor \varepsilon j \rfloor -m} \big\|  \leq  \big\| \exp(A)^j \exp(B)^{-\lfloor \varepsilon j \rfloor} \big\| \big\| \exp(B)^{-m}\big\|  \to 0
\]
as $m \to \infty$, since all eigenvalues of $\exp(B)$ are strictly greater than $1$. Therefore, there exists the smallest integer $m$ (and we call it $d_{j}$) such that
\[
 \big\| \exp(A)^j  \exp(B)^{-\lfloor \varepsilon j \rfloor -m}\big\| \leq 1.
\]
Since, by definition of $d_{j}$, the above inequality fails for $d_{j}-1$,
\begin{align*}
 \big\| \exp(A)^j\exp(B)^{-\lfloor \varepsilon j \rfloor -d_{j}}  \big\|
 & \geq \big\|\exp(A)^j  \exp(B)^{-\lfloor \varepsilon j \rfloor -d_{j}+1 } \big\| \| \exp(B)\|^{-1} \\
 & \geq \| \exp(B)\|^{-1},
\end{align*}
whence, for all $j\in \Z$,
\begin{equation}\label{doublecontrol}
\| \exp(B)\|^{-1}  \leq  \big\| \exp(A)^j \exp(B)^{-\lfloor \varepsilon j \rfloor -d_{j}} \big\|  \leq 1.
\end{equation}
If $(d_{j})$ was a bounded sequence, then~\eqref{limbc} could not hold; therefore, either $d_{j} \to +\infty$ or $d_{j} \to -\infty$  as $j\to + \infty$ {or $j\to -\infty$}. We consider the first case only, the others being analogous.

The remainder of the proof is split into three steps.

\bigskip

\noindent \textbf{Step 1.} (\emph{Auxiliary functions}). We construct a sequence of functions satisfying properties (f1)--(f3) considered in Section~\ref{sec:atomic}. For this, let $X \in\mathfrak{g}$ be such that $\|X \|=1$, and define 
\[
x_{1}= \exp_G(  X).
\]
Notice that $\|x_{1}\|= 1$. Choose  $\epsilon, \theta \in (0,1)$ small enough so that the balls $ \mathcal{B}(x_{1}, \epsilon)$ and $\mathcal{B}(e, \theta)$ are disjoint. In addition, fix $R>0$ such that $\mathcal{B}(x_{1}, \epsilon) \subseteq \mathcal{B}(e,R)$. Even though $\|\cdot \|$ is not a norm, all of these choices are possible by means of Lemma~\ref{lemmaeta} (applied to $A$ or $B$, after an associated homogeneous semi-norm on $G$ is chosen). Indeed, if $y \in \mathcal{B}(x_1, \epsilon)$, then \eqref{eq:triangle2} yields a constant $c > 0$ (only depending on $A$ or $B$) such that
\[
\| y \| = \| x_1 \cdot x_1^{-1} \cdot y \| \geq \big( c \| x_1 \| - \| x_1^{-1} y \|^{\gamma} \big)^{1/\gamma} \geq \big( c - \epsilon)^{1/\gamma} > \theta,
\]
i.e., $y \in \mathcal{B}(e, \theta)^c$, provided $\epsilon, \theta \in (0,1)$ are sufficiently small.
With such choices, we proceed to construct a function $a_{0}$ satisfying (f1)--(f3) in Section~\ref{sec:atomic} with $x_0 = e$ and $j_1 = j_2 = 0$.

Let $\alpha \in \mathbb{N}$ be so large that $(p, \alpha)$ is admissible for both $A$ and $B$. Set $n_{\alpha} := \# \{ I \in \mathbb{N}_0^n \colon|I| \leq \alpha \}$, and define the map
\[
T \colon L^{\infty} (\mathcal{B}(e, \theta)) \to \mathbb{R}^{n_{\alpha}}, \quad f \mapsto \bigg( \int_{\mathcal{B}(e, \theta)} f(x) \eta^I (x) \; d\mu(x) \bigg)_{|I| \leq \alpha}.
\]
Then $T$ is surjective, and hence  defining $v \in \mathbb{R}^{n_{\alpha}}$ by $v_{I} := - \int_{\mathcal{B}(x_1, \epsilon)} \eta^I \, d\mu$, there exists $f \in L^{\infty} (\mathcal{B}(e, \theta))$ such that $T(f) = v$.
Let $\tilde a_0 \colon G \to \mathbb{C}$ be defined by
\[
\tilde a_0(x)=
\begin{cases}
f(x)  \quad & \mbox{if } x \in \mathcal{B}(e,\theta), \\
1 \quad & \mbox{if } x \in \mathcal{B}(x_{1},\epsilon) ,\\
0 & \mbox{if } x\notin \mathcal{B}(e,\theta) \cup \mathcal{B}(x_{1},\epsilon).
\end{cases}
\]
Then $\supp \tilde a_{0} \subseteq  \mathcal{B}(e,R)$ and $\int_G \tilde a_0 \cdot \eta^I \, d\mu = 0$ for all $|I|\leq \alpha$. Choose now $\omega_{0}\in (0, 1]$ such that, if $a_0:=\omega_{0}\, \tilde a_{0}$, then $\|a_{0}\|_{\infty}\leq 1$. Then $a_0 \in \mathcal{F}_{\alpha, p,R}(A,B)$.

We now construct suitable dilations of $a_0$. Define $Q_{j}$ to be the matrix such that
\begin{equation}\label{defQj}
\exp(Q_{j}) =  \exp(A)^j\exp(B)^{-\lfloor \varepsilon j \rfloor -d_{j}}, \qquad j\geq 1,
\end{equation}
by using the Baker--Campbell--Hausdorff formula. Then $\exp(Q_j)$ is an automorphism of $\mathfrak{g}$ as it is composition of automorphisms. As such, $\delta_{e}^{Q_{j}}$ and $\delta_{1/e}^{Q_{j}}$ are automorphisms of $G$.

Then pick $Z_{j}\in \mathfrak{g}$ such that $\|Z_{j}\|=1$ and
\begin{equation}\label{defcj}
\|\exp(Q_{j})Z_{j}\|= \| \exp(Q_{j})\|=:\tau_j.
\end{equation}
By~\eqref{doublecontrol} then
\begin{equation}\label{doublecontrolc}
\| \exp(B)\|^{-1}  \leq \tau_j \leq 1,
\end{equation}
and by taking the determinants in~\eqref{defQj}
\begin{equation}\label{traceQj}
\tr(Q_{j}) = j\tr(A) - (\lfloor \varepsilon j \rfloor +d_{j}) \tr(B).
\end{equation}
In addition to $Q_j$, choose a matrix $U_{j}$ such that $\exp(U_{j})$ is unitary and $\exp(U_{j})X = Z_{j}$, and define $z_{j}= \exp_G(Z_{j})$. Then define
$
a_{j} := D_{\e^{-1}}^{Q_{j},p} D_{\e^{-1}}^{U_{j},p}a_{0}$ for $j \in \mathbb{N}$.

We claim that $a_j \in \mathcal{F}_{\alpha, p,R} (A,B)$ for all $j\in \N$. For this, observe first that since $\exp(U_{j})$ is unitary, it holds that $\delta_{\e}^{U_{j}}\mathcal{B}(e,r) \subseteq \mathcal{B}(e,r) $ for all $r>0$. Moreover,
\begin{equation}\label{Uj34}
\delta_{\e}^{U_{j}}x_{1} = \exp_G( \exp(U_{j})X) = \exp_G( Z_{j}) =  z_{j}.
\end{equation}
Since
\[
\delta_{\e}^{Q_{j}} = \delta_{\e^{j}}^{A} \delta_{\e^{-\lfloor \varepsilon j \rfloor -d_{j}}}^{B},
\]
it holds that $\delta_{\e}^{Q_{j}}\mathcal{B}(e,R) \subseteq \mathcal{B}(e,\tau_jR)$ by definition~\eqref{defcj} of $\tau_j$, and
\[
\supp a_{j}  \subseteq \delta_{\e}^{Q_{j}} \mathcal{B}(e,R) =\delta_{\e^{j}}^{A}  \delta_{\e^{-\lfloor \varepsilon j \rfloor -d_{j}}}^{B} \mathcal{B}(e,R),
\]
showing (f1). Moreover, $\supp a_{j} \subseteq \mathcal{B}(e,R)$ for all $j$'s.  More precisely, by~\eqref{Uj34},
\[
\delta_{\e}^{Q_{j}} \delta_{\e}^{U_{j}} \mathcal{B}(x_{1},\epsilon) = \delta_{\e}^{Q_{j}}  \mathcal{B}(z_{j} ,\epsilon),
\]
and thus
\begin{equation}\label{suppaj}
\supp a_{j}  \subseteq \delta_{\e}^{Q_{j}} ( \mathcal{B}(e,\theta) \cup \mathcal{B}(x_{1},\epsilon) ) \subseteq \mathcal{B}(e,\tau_j\theta) \cup  \delta_{\e}^{Q_{j}}  \mathcal{B}(z_{j} ,\epsilon).
\end{equation}
For (f2), note that, by~\eqref{traceQj} and $\tr(U_{j})=0$,
\begin{align*}
\| a_j \|_{\infty} = \e^{-\frac{\tr(Q_{j})}{p} - \frac{\tr(U_{j})}{p}} \| a_0 \|_{\infty} \leq \e^{-j\frac{\tr(A)}{p} + (\lfloor \varepsilon j \rfloor +d_{j})\frac{ \tr(B)}{p}},
\end{align*}
as required. In addition, we note that
\begin{equation}\label{gammaj}
\begin{split}
a_{j}(\delta_{\e}^{Q_{j}}  \mathcal{B}(z_{j} ,\epsilon))
&= \e^{-\frac{\tr(Q_{j})}{p} - \frac{\tr(U_{j})}{p}} a_{0} (  \mathcal{B}(x_{1} ,\epsilon))\\
& = \e^{-j\frac{\tr(A)}{p} + (\lfloor \varepsilon j \rfloor +d_{j})\frac{ \tr(B)}{p}}\omega_{0} =: \omega_{j}.
\end{split}
\end{equation} 
Lastly, arguing as in~\eqref{orthogonality}, one sees that also~(f3) holds.

\bigskip

\noindent \textbf{Step 2.} (\emph{Case $p = 1$}).
Suppose that $p=1$. By construction, see~\eqref{defcj} and~\eqref{doublecontrolc}, we have that $\| \exp(B) \|^{-1} \leq \| \exp (Q_j) \| \leq 1$ and $\| Z_j \| = 1$ for $j \in \mathbb{N}$. Hence, by passing to a subsequence if necessary, it may be assumed that $\exp(Q_j) \to Q'$ for some matrix $Q' : \mathfrak{g} \to \mathfrak{g}$ satisfying $\| \exp(B) \|^{-1} \leq \| Q' \| \leq 1$, and that $Z_j \to Z'$ for some $Z' \in \mathfrak{g}$ with $\|Z'\| =1$. In addition, it may be assumed that $\exp(U_j) \to U'$ for some unitary matrix $U' \in \mathrm{GL}(\mathfrak{g})$. Since $\varepsilon = \tr(A) / \tr(B)$ and $d_j \to +\infty$, it follows that
\begin{align*}
 |\det (\exp(Q_j)) | &= \e^{\tr(Q_j)} = \e^{j \tr(A) - ( \lfloor \varepsilon j \rfloor + d_j ) \tr(B)} \leq \e^{(1-d_j) \tr(B)}  \to 0,
\end{align*}
as $j \to \infty$. Hence, $|\det (Q') | = 0$, and,  in particular, $Q'$ is not surjective.

Next, we show that the sequence $(a_j)_{j \in \mathbb{N}}$ of functions $a_j \in \mathcal{F}_{\alpha, p, R} (A, B)$ constructed in Step 1 converges to a nonzero regular Borel measure $a$. For this, let $\varphi \in C_b (G)$ be arbitrary. Then a direct calculation entails
\begin{align*}
 \int_G a_j (x) \varphi(x) \; d\mu(x)
 &= \e^{- \tr(Q_j)} \int_G (D_{\e^{-1}}^{U_j} a_0)(\delta_{\e^{-1}}^{Q_j} (x)) \varphi (\delta_\e^{Q_j} \delta_{\e^{-1}}^{Q_j} (x)) \; d\mu(x) \\
 &= \int_G (D_{\e^{-1}}^{U_j} a_0)(y) \varphi (\delta_\e^{Q_j} (y)) \; d\mu(y) \\
 &= \int_G a_0(z) \varphi (\delta_\e^{Q_j} \delta_\e^{U_j} (z)) \; d\mu(z).
 \end{align*}
 Using that $\varphi (\delta_\e^{Q_j} \delta_\e^{U_j} (z)) \to \varphi (\exp_G \circ \; Q' \circ U' \circ \exp_G^{-1} (z))$, together with the dominated convergence theorem, it follows therefore that
 \begin{align*}
 \int_G a_j (x) \varphi(x) \; d\mu(x)  &\to \int_G a_0(z) \varphi (\exp_G \circ \; Q' \circ U' \circ \exp_G^{-1} (z)) \; d\mu(z) \\
 &=: \int_G \varphi (x) \; da (x)
\end{align*}
for a unique regular Borel measure $a$ on $G$. Since $Q'$ is not surjective, it follows that $\supp a \neq G$, and thus $a$ is singular with respect to the Haar measure $\mu$ on $G$. 

We shall show that $a \neq 0$. Given some $z \in \mathcal{B}(e,\theta)$, set $z' = \exp_G \circ \;Q' \circ U' \circ \exp_G^{-1} (z)$, and set $x_1' = \exp_G \circ \;Q' \circ U' \circ \exp_G^{-1} (x_1)$. Then
\begin{align*}
 \| z' \| = \| Q' U' \exp_G^{-1} (z) \| \leq \| Q' \| \| U' \exp_G^{-1} (z) \| \leq \theta  \|Q' \| ,
\end{align*}
and, using \eqref{defcj} and \eqref{Uj34},
\begin{align*}
 \| x'_1 \| &=   \lim_{j \to \infty} \|\exp(Q_j) \exp(U_j) X \|
 = \lim_{j \to \infty} \| \exp(Q_j) Z_j \|
 = \lim_{j \to \infty} \| \exp(Q_j ) \| \\
 &= \| Q' \|.
\end{align*}
Therefore, an application of Lemma~\ref{lemmaeta} (with $R = 1$) yields a constant $c>0$ such that
\begin{align*}
\|(x'_1)^{-1} z'  \| \geq \big(c \| x_1' \| - \|z' \|^{\gamma} \big)^{1/\gamma} \geq \big( c \| Q' \| - \theta  \| Q'\|^{\gamma} \big)^{1/\gamma}.
\end{align*}
Hence, by decreasing $\theta \in (0,1]$ if necessary, there exists $\delta > 0$ such that $\| (x_1')^{-1} z'  \| > \delta$, that is, $z' \notin \mathcal{B}(x_1', \delta)$. Choose now a non-negative continuous function $\varphi$ satisfying $\supp \varphi \subseteq \mathcal{B} (x_1', \delta)$ and $\varphi(x_1') = 1$. Then $\varphi(\exp_G \circ \;Q' \circ U' \circ \exp_G^{-1} (z)) = 0$ for any $z \in \mathcal{B}(e, \theta)$, and, by construction of $a_0$,
\begin{align*}
 \int_G \varphi (x) \; da (x) &= \int_G a_0 (z) \varphi(\exp_G \circ \; Q' \circ U' \circ \exp_G^{-1} (z)) \; d\mu(z) \\
 &= \int_{\mathcal{B}(e, \theta)} a_0 (z) \varphi(\exp_G \circ \; Q' \circ U' \circ \exp_G^{-1} (z)) \; d\mu(z) \\
 & \quad \quad \quad + \omega_0 \int_{\mathcal{B}(x_1, \epsilon)} \varphi( \exp_G \circ \; Q' \circ U' \circ \exp_G^{-1} (z)) \; d\mu(z) \\
 &= \omega_0 \int_{\mathcal{B}(x_1, \epsilon)} \varphi( \exp_G \circ \; Q' \circ U' \circ \exp_G^{-1} (z)) \; d\mu(z)> 0,
\end{align*}
where the inequality follows from the fact that $\varphi\geq 0$ is continuous and $\varphi(x_1') = 1$. This shows that $a$ is nonzero, whence in particular $a\notin L^1$.

On the other hand, by Fatou's lemma and the grand maximal characterization~\eqref{grandmax},
\begin{align*}
\|a\|_{H^{1}_{A}} \approx \int \mathcal{M}_{(N),A}^{0} a \, \dd \mu
&= \int \lim_{i\to \infty}\mathcal{M}_{(N),A}^{0} a_{j_{i}} \, \dd \mu \\
& \leq \liminf_{i\to \infty} \int \mathcal{M}_{(N),A}^{0} a_{j_{i}} \, \dd \mu \leq  C \liminf_{i\to \infty} \|a_{j_{i}}\|_{H^{1}_{A}} \leq C',
\end{align*}
where the last bound follows from Lemma~\ref{lemmaunifbound}. Thus $a\in H^{1}_{A}$. This contradicts that $a \notin L^1$, and completes the proof for $p = 1$.

\bigskip

\noindent \textbf{Step 3.} (\emph{Case $p < 1$}).
Suppose that $p<1$ and set $\sigma := 1/\gamma^{2}$, where $\gamma  > 0$ is that of  Lemma~\ref{lemmaeta}. Pick $\phi \in \Ss$ such that $\phi = 1$ on $\mathcal{B}(e, \epsilon_{1} \|\exp(B)\|^{-\sigma})$ and $\supp \phi \subseteq \mathcal{B}(e, \epsilon_{2} \|\exp(B)\|^{-\sigma})$ for some $0<\epsilon_{1}<\epsilon_{2} < 1$ to be determined.

For all $z\in G$, by~\eqref{suppaj} and since $\phi(x^{-1}z) = 0$ if  $x \notin z\mathcal{B}(e, \epsilon_{2}\|\exp(B)\|^{-\sigma})$,
\[
M_{\phi,A}^{0}a_{j}(z) \geq \left| \int_{z\mathcal{B}(e, \epsilon_{2}\|\exp(B)\|^{-\sigma}) \cap ( \mathcal{B}(e,\tau_j\theta) \cup  \delta_{\e}^{Q_{j}}  \mathcal{B}(z_{j} ,\epsilon) )} a_{j}(x) \phi(x^{-1}z) \, \dd \mu (x) \right|.
\]
Suppose now that $z\in \mathcal{B}( \delta_{\e}^{Q_{j}} (z_{j}), \beta \|\exp(B)\|^{-\sigma})$ for some $\beta \in (0,1)$. Then, by~\eqref{eq:triangle2},
\begin{align*}
z\mathcal{B}(e, \epsilon_{2}\|\exp(B)\|^{-\sigma})
&\subseteq  \delta_{\e}^{Q_{j}} (z_{j})  \cdot \mathcal{B}( e, \beta \|\exp(B)\|^{-\sigma}) \mathcal{B}(e, \epsilon_{2}\|\exp(B)\|^{-\sigma})\\
& \subseteq   \delta_{\e}^{Q_{j}}(z_{j}) \cdot \mathcal{B}(e, c_{\beta,\epsilon_{2}} \|\exp(B)\|^{-1/\gamma}),
\end{align*}
with $c_{\beta,\epsilon_{2}}$ small if $\beta$ and $\epsilon_{2}$ are small. Thus, if $x\in z\mathcal{B}(e, \epsilon_{2}\|\exp(B)\|^{-\sigma}) $, then $ \| (\delta_{\e}^{Q_{j}}z_{j})^{-1} \cdot x \| \leq c_{\beta,\epsilon_{2}} \|\exp(B)\|^{-1/\gamma}$, so that by~\eqref{eq:triangle2},~\eqref{defcj} and~\eqref{doublecontrolc} there exists $c>0$ (independent of $j \in \mathbb{N}$) such that
\begin{align*}
\|x\|
&= \| \delta_{\e}^{Q_{j}}z_{j} \cdot (\delta_{\e}^{Q_{j}}z_{j})^{-1} \cdot x  \|  \\
&  \geq  \Big(c \|\delta_{\e}^{Q_{j}}z_{j} \| - \|(\delta_{\e}^{Q_{j}}z_{j})^{-1}  \cdot x \|^{\gamma} \Big)^{1/\gamma}  \geq  ( c\,  \tau_j - c_{\beta,\epsilon_{2}} \tau_j)^{1/\gamma} > \theta \tau_j,
\end{align*}
i.e.\ $x \notin \mathcal{B}(e,\theta \tau_j)$, provided $\beta$, $\epsilon_{2}$ and $\theta$ are small enough. Observe that $\tau_j$ is bounded above and below away from $0$ by~\eqref{doublecontrolc}, whence $\tau_j^{1/\gamma} \approx \tau_j$. The above proves that
\begin{align*}
z\mathcal{B}(e, \epsilon_{2}\|\exp(B)\|^{-\sigma}) &\cap ( \mathcal{B}(e,\tau_j\theta) \cup  \delta_{\e}^{Q_{j}}  \mathcal{B}(z_{j} ,\epsilon) )\\
&  =  z\mathcal{B}(e, \epsilon_{2}\|\exp(B)\|^{-\sigma})  \cap \delta_{\e}^{Q_{j}}(z_{j}) \cdot  \delta_{\e}^{Q_{j}} \mathcal{B}(e ,\epsilon),
\end{align*}
where we used that $ \delta_{\e}^{Q_{j}}  \mathcal{B}(z_{j} ,\epsilon) = \delta_{\e}^{Q_{j}}(z_{j}) \cdot  \delta_{\e}^{Q_{j}} \mathcal{B}(e ,\epsilon)$. Therefore, by the above,~\eqref{gammaj} and  since $(\delta_{\e}^{Q_{j}}(z_{j}))^{-1}z\in \mathcal{B}( e, \beta \|\exp(B)\|^{-\sigma})$, if $\beta$ and $\epsilon_{1}$ are small enough then
\begin{align}\label{inclballs}
M_{\phi,A}^{0}a_{j}(z)  
& \geq \omega_{j}\, \mu(\mathcal{B}(z, \epsilon_{2}\|\exp(B)\|^{-\sigma})  \cap \delta_{\e}^{Q_{j}}(z_{j}) \cdot  \delta_{\e}^{Q_{j}} \mathcal{B}(e,\epsilon)) \nonumber \\
& =  \omega_{j}\, \mu((\delta_{\e}^{Q_{j}} (z_{j}))^{-1}z  \cdot \mathcal{B}(e, \epsilon_{2}\|\exp(B)\|^{-\sigma})  \cap \delta_{\e}^{Q_{j}}  \mathcal{B}(e,\epsilon))\nonumber  \\
& \geq \omega_{j}\, \mu( \mathcal{B}(e,\epsilon_{ 1} \|\exp(B)\|^{-\sigma}) \cap \delta_{\e}^{Q_{j}}  \mathcal{B}(e,\epsilon))\nonumber \\
& = \omega_{j} \, \e^{\tr(Q_{j})} \mu( \delta_{1/ \e}^{Q_{j}}  \mathcal{B}(e,\epsilon_{ 1} \|\exp(B)\|^{-\sigma}) \cap \mathcal{B}(e,\epsilon)).
\end{align}
Observe now that $ \delta_{1/ \e}^{Q_{j}}  \mathcal{B}(e,\epsilon_{ 1} \|\exp(B)\|^{-\sigma})  \supseteq \mathcal{B}(e,\tau_j^{-1} \epsilon_{ 1} \|\exp(B)\|^{-\sigma})$. Thus, if $\epsilon_{ 1}$ is small enough so that $\tau_j^{-1} \epsilon_{ 1} \|\exp(B)\|^{-\sigma} < \epsilon$, then by~\eqref{inclballs}
\begin{align*}
M_{\phi,A}^{0}a_{j}(z)   & \geq \omega_{j}\, \e^{\tr(Q_{j})} \mu( \mathcal{B}(e,\tau_j^{-1} \epsilon_{ 1} \|\exp(B)\|^{-\sigma}))\\
& = \omega_{j}\, \e^{\tr(Q_{j})} \mu( \delta_{\tau_j^{-1} \epsilon_{ 1} \|\exp(B)\|^{-\sigma}}^{I}\mathcal{B}(e, 1))  \geq c \, \omega_{j}\, \e^{\tr(Q_{j})}.
\end{align*}
Thus, by~\eqref{traceQj} and~\eqref{gammaj},
\begin{align*}
M_{\varphi}^{0,A}a_{j}(z)
\geq c\, \e^{(\frac{1}{p}-1) [\lfloor \varepsilon j \rfloor \tr(B)  - j\tr(A)]} \e^{(\frac{1}{p}-1) \tr(B)d_{j}} .
\end{align*}
Since
\begin{align*}
 0 = \varepsilon j  \tr(B)  -j\tr(A) \geq   \lfloor \varepsilon j \rfloor \tr(B)  -j\tr(A) \geq (\varepsilon j -1) \tr(B)  -j\tr(A) = -\tr(B)
 \end{align*}
 for all $j\in \N$, we conclude
 \[
 M_{\phi,A}^{0}a_{j}(z)  \geq c' \e^{(\frac{1}{p}-1) \tr(B)d_{j}},
 \]
 from which it follows that 
\begin{align*}
 \int_{G} |M_{\phi,A}^{0}a_{j}|^{p} \, \dd \mu
 &  \geq \int_{\mathcal{B}( \delta_{\e}^{Q_{j}}z_{j}, \beta \|\exp(B)\|^{-\sigma})} |M_{\phi,A}^{0}a_{j} |^{p} \, \dd \mu\\
 & \geq C \mu(\mathcal{B}( e, \beta \|\exp(B)\|^{-\sigma}))\, \e^{(\frac{1}{p}-1) \tr(B)d_{j}} \to \infty
 \end{align*}
as $j \to \infty$, which is a contradiction by Lemma~\ref{lemmaunifbound} and the grand maximal function characterization~\eqref{grandmax} of the Hardy space seminorm. This completes the proof.
\end{proof}

Lastly, we have the following simple consequence on equivalence of the dual of the Hardy spaces $H^1_A$ and $H^1_B$. These spaces can be identified
with $BMO$ spaces; see \cite{FollandStein, bownik2007duals} for definitions and precise details. In particular, $(H^1_A)^*, (H^1_B)^* \hookrightarrow \mathcal{S}' / \mathcal{P}_0$ by \cite[Proposition 5.9]{FollandStein}. 

\begin{corollary}
Let $A, B \in \mathrm{GL}(\mathfrak{g})$ be admissible. Then $(H^1_A)^* = (H^1_B)^*$ if and only if $A = c B$ for some $c > 0$.\end{corollary}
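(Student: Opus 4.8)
The plan is to deduce the statement from Theorem~\ref{thm:classification_hardy} by showing that the hypothesis $(H^1_A)^* = (H^1_B)^*$ already forces $H^1_A = H^1_B$; the converse implication is immediate. For that converse: if $A = cB$ with $c>0$, then Lemma~\ref{lem:scaling_invariant} (applied with $p=1$) gives $H^1_A = H^1_{cB} = H^1_B$ with identical quasi-norms, hence $(H^1_A)^* = (H^1_B)^*$.

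For the main direction, assume $(H^1_A)^* = (H^1_B)^*$. First I would promote this set-theoretic equality to an equivalence of norms: both spaces are Banach spaces, coincide as sets, and are continuously included in $\mathcal{S}'/\mathcal{P}_0$ by \cite[Proposition 5.9]{FollandStein}, so the closed graph theorem applies exactly as in the proof of Lemma~\ref{lem:equivalentnorms}, yielding a constant $C>0$ with $C^{-1}\|g\|_{(H^1_A)^*}\leq\|g\|_{(H^1_B)^*}\leq C\|g\|_{(H^1_A)^*}$ for every $g$. Next I would introduce the subspace $\mathcal{S}_0 := \{\phi\in\mathcal{S}\colon \int_G\phi\,d\mu=0\}$. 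For $p=1$ the vanishing-moment condition characterising $\mathcal{S}\cap H^1_A$ reduces to $\int_G\phi\,d\mu=0$ and does not involve the dilations, so $\mathcal{S}_0 = \mathcal{S}\cap H^1_A = \mathcal{S}\cap H^1_B$, and $\mathcal{S}_0$ is dense in both $H^1_A$ and $H^1_B$ (this is standard; it can also be seen from Hahn--Banach, since a bounded functional on $H^1_A$ annihilating $\mathcal{S}_0$ is an element of $\mathcal{S}'/\mathcal{P}_0$ whose lifts to $\mathcal{S}'$ kill every mean-zero Schwartz function, hence are constants).

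With these preliminaries in place, for $\phi\in\mathcal{S}_0$ the pairing $\langle\phi,g\rangle$ against $g\in\mathcal{S}'/\mathcal{P}_0$ is well defined (because $\int_G\phi\,d\mu=0$), and Hahn--Banach gives
\[
\|\phi\|_{H^1_A}=\sup_{\|g\|_{(H^1_A)^*}\leq 1}|\langle\phi,g\rangle|,\qquad
\|\phi\|_{H^1_B}=\sup_{\|g\|_{(H^1_B)^*}\leq 1}|\langle\phi,g\rangle|.
\]
Since the two norms on the common dual are equivalent, these two suprema are comparable, so $\|\phi\|_{H^1_A}\asymp\|\phi\|_{H^1_B}$ for all $\phi\in\mathcal{S}_0$. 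To upgrade this to an identity of spaces, I would take $f\in H^1_A$ and a sequence $\phi_n\in\mathcal{S}_0$ with $\phi_n\to f$ in $H^1_A$; by the norm equivalence on $\mathcal{S}_0$ the sequence $(\phi_n)$ is Cauchy in $H^1_B$, hence converges in $H^1_B$ to some $g$, and since both $H^1_A$ and $H^1_B$ embed continuously in $\mathcal{S}'$ (Remark~\ref{rem:FS}(b)) we get $f=g\in H^1_B$. Thus $H^1_A\subseteq H^1_B$, and by symmetry $H^1_A=H^1_B$, whence Theorem~\ref{thm:classification_hardy} gives $A=cB$.

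I expect the only points requiring genuine care — rather than a true obstacle — to be the density of $\mathcal{S}_0$ in $H^1_A$ and $H^1_B$ and the continuity of the inclusion $(H^1_A)^*\hookrightarrow\mathcal{S}'/\mathcal{P}_0$ used in the closed-graph step; both are standard consequences of the theory in \cite{FollandStein}, but they should be invoked explicitly, since the whole argument rests on the duality pairing $\langle\phi,g\rangle$ being literally the same object for the two dilation structures.
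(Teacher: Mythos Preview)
Your proof is correct and follows essentially the same strategy as the paper: apply the closed graph theorem (via the embedding into $\mathcal{S}'/\mathcal{P}_0$) to get equivalence of the dual norms, then use duality to obtain equivalence of the $H^1$ norms, and conclude with Theorem~\ref{thm:classification_hardy}. The paper's version is terser --- it simply writes the duality identity $\|h\|_{H^1_A}\asymp\|h\|_{H^1_B}$ ``for all $h\in H^1_A=H^1_B$'' without isolating a common dense subspace --- whereas your introduction of $\mathcal{S}_0$ and the density/completeness argument makes explicit the step that justifies passing from norm equivalence on the intersection to equality of the full spaces.
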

\begin{proof}
Suppose that $(H^1_A)^* = (H^1_B)^*$. By arguing as in Lemma~\ref{lem:equivalentnorms},
one sees that $\| f \|_{(H^1_A)^*} \asymp \| f \|_{(H^1_A)^*}$ holds for all $f \in (H^1_A)^* = (H^1_B)^*$. By duality then
\[
 \| h \|_{H^1_A} = \sup_{\substack { f \in (H^1_A)^* \\ \| f \|_{(H^1_A)^*} \leq 1}} |f(h)| \asymp \sup_{\substack { f \in (H^1_B)^* \\ \| f \|_{(H^1_B)^*} \leq 1}} |f(h)| = \| h \|_{H^1_B}
\]
for all $h \in H^1_A = H^1_B$. An application of Theorem~\ref{thm:classification_hardy} therefore yields $A = c B$.

Conversely, if $A = c B$, then $H^1_A = H^1_B$ by Proposition~\ref{lem:scaling_invariant}, and the result follows by duality.
\end{proof}

\subsection*{Acknowledgements} The authors wish to thank M.\ Bownik for clarifying comments on \cite{bownik2020pde}, and F.\ Voigtlaender for a helpful discussion on a part of the proof of Theorem~\ref{thm:main}.


\begin{thebibliography}{10}

\bibitem{bownik2003anisotropic}
M.~Bownik.
\newblock Anisotropic {H}ardy spaces and wavelets.
\newblock {\em Mem. Amer. Math. Soc.}, 164(781):vi+122, 2003.

\bibitem{bownik2007duals}
M.~Bownik and G.~B. Folland.
\newblock Duals of {Hardy} spaces on homogeneous groups.
\newblock {\em Math. Nachr.}, 280(11):1223--1229, 2007.

\bibitem{bownik2020pde}
M.~Bownik and L.-A.~D. Wang.
\newblock A partial differential equation characterization of anisotropic
  {H}ardy spaces.
\newblock {\em Math. Nachr.}, 296(6):2258--2275, 2023.

\bibitem{calderon1977atomic}
A.~P. Calder\'on.
\newblock An atomic decomposition of distributions in parabolic {{\(H^p\)}}
  spaces.
\newblock {\em Adv. Math.}, 25:216--225, 1977.

\bibitem{calderon1975parabolic}
A.~P. Calder\'on and A.~Torchinsky.
\newblock Parabolic maximal functions associated with a distribution.
\newblock {\em Adv. Math.}, 16:1--64, 1975.

\bibitem{calderon1977parabolic}
A.~P. Calder\'on and A.~Torchinsky.
\newblock Parabolic maximal functions associated with a distribution. {II}.
\newblock {\em Adv. Math.}, 24:101--171, 1977.

\bibitem{cheshmavar2020classification}
J.~Cheshmavar and H.~F\"{u}hr.
\newblock A classification of anisotropic {B}esov spaces.
\newblock {\em Appl. Comput. Harmon. Anal.}, 49(3):863–896, 2020.

\bibitem{christ1984singular}
M.~Christ and D.~Geller.
\newblock Singular integral characterizations of {Hardy} spaces on homogeneous
  groups.
\newblock {\em Duke Math. J.}, 51:547--598, 1984.

\bibitem{dziubanski1992remark}
J.~Dziuba{\'n}ski.
\newblock Remark on commutative approximate identities on homogeneous groups.
\newblock {\em Proc. Am. Math. Soc.}, 114(4):1015--1016, 1992.

\bibitem{fischer2016quantization}
V.~Fischer and M.~Ruzhansky.
\newblock {\em Quantization on nilpotent {Lie} groups}, volume 314 of {\em
  Prog. Math.}
\newblock New York, NY: Birkh{\"a}user/Springer, 2016.

\bibitem{FollandStein}
G.~B. Folland and E.~M. Stein.
\newblock {\em Hardy spaces on homogeneous groups}, volume~28 of {\em Math.
  Notes (Princeton)}.
\newblock Princeton University Press, Princeton, NJ, 1982.

\bibitem{fuehr2022classifying}
H.~F{\"u}hr and R.~Koch.
\newblock Classifying decomposition and wavelet coorbit spaces using coarse
  geometry.
\newblock {\em J. Funct. Anal.}, 283(9):52, 2022.
\newblock Id/No 109637.

\bibitem{glowacki1986stable}
P.~Glowacki.
\newblock Stable semi-groups of measures as commutative approximate identities
  of non-graded homogeneous groups.
\newblock {\em Invent. Math.}, 83:557--582, 1986.

\bibitem{glowacki1987inversion}
P.~Glowacki.
\newblock An inversion problem for singular integral operators on homogeneous
  groups.
\newblock {\em Stud. Math.}, 87:53--69, 1987.

\bibitem{higham2008functions}
N.~J. Higham.
\newblock {\em Functions of matrices. {Theory} and computation}.
\newblock Philadelphia, PA: Society for Industrial {and} Applied Mathematics
  (SIAM), 2008.

\bibitem{koppensteiner2022classification}
S.~Koppensteiner, J.~T. van Velthoven, and F.~Voigtlaender.
\newblock Classification of anisotropic {Triebel}-{Lizorkin} spaces.
\newblock {\em Math. Ann.}, 389(2):1883--1923, 2024.

\bibitem{sato2020hardy}
S.~Sato.
\newblock Hardy spaces on homogeneous groups and {Littlewood}-{Paley}
  functions.
\newblock {\em Q. J. Math.}, 71(1):295--320, 2020.

\end{thebibliography}
\end{document}